\documentclass[11pt]{amsart}

\usepackage{amsmath, amsfonts, amssymb, amsthm, hyperref, mathrsfs, xcolor}
\usepackage{upgreek}
\usepackage{wasysym}
\usepackage{mathrsfs}
\usepackage{ctable, tabularx}
\usepackage{fullpage}

\newtheorem{thm}{Theorem}[section]
\newtheorem*{thm*}{Theorem}

\newtheorem{lem}[thm]{Lemma}
\newtheorem{pro}[thm]{Proposition}

\newtheorem*{que}{Question}
\theoremstyle{definition}

\theoremstyle{remark}
\newtheorem{rmk}[thm]{Remark}

\numberwithin{equation}{section}

\newcommand{\mt}{J}

\def\a{\alpha}

\newcommand{\G}{\Gamma}

\newcommand{\comment}[1]{}

\newcommand{\gt}[1]{\mathfrak{#1}}

\newcommand{\SL}{\operatorname{\textsl{SL}}} 
\newcommand{\Aut}{\operatorname{Aut}}
\newcommand{\chr}{\operatorname{char}}
\newcommand{\genus}{\operatorname{genus}}
\newcommand{\val}{v}
\newcommand{\xmod}{{\rm \;mod\;}}
\newcommand{\moa}{{m}}
\newcommand{\cns}[1]{c_{#1}^*}
\newcommand{\jj}{{t}}
\newcommand{\Xp}{X_p}
\newcommand{\ex}{{e}}

\newcommand{\MM}{\mathbb{M}}	%monster group
\newcommand{\RR}{{\mathbb R}}	%Reals
\newcommand{\ZZ}{{\mathbb Z}}	%Integers
\newcommand{\QQ}{{\mathbb Q}}	%Rationals
\newcommand{\FF}{{\mathbb F}}	%finite field
\newcommand{\HH}{{\mathbb H}}	%half plane

\begin{document}

%--------------------------------------------------------------------------------------------------------------------------------%
\title{Modular Functions and the Monstrous Exponents}
%--------------------------------------------------------------------------------------------------------------------------------%

\renewcommand{\thefootnote}{\fnsymbol{footnote}} 
\footnotetext{\emph{MSC2020:} 11F03, 20C34, 20D08.}     

\author{John F.\ R.\ Duncan}
\address{Institute of Mathematics, Academia Sinica, Taipei, Taiwan}
\email{jduncan@as.edu.tw}

\author{Holly Swisher}
\address{Department of Mathematics, Oregon State University, Corvallis, Oregon, U.S.A.}
\email{swisherh@oregonstate.edu}

\begin{abstract}
We present a modular function-based approach to explaining, for primes larger than $3$, the exponents that appear in the prime decomposition of the order of the monster finite simple group.
\end{abstract}

\maketitle

%--------------------------------------------------------------------------------------------------------------------------------%
\section{Introduction}\label{sec:int}
%--------------------------------------------------------------------------------------------------------------------------------%

Compelling evidence for a new finite simple group, a group-theoretic ``friendly giant'' \cite{MR671653}, with order 
\begin{gather}\label{eqn:ord}
\#\MM = 2^{46} \cdot 3^{20}\cdot 5^9 \cdot 7^6\cdot 11^2\cdot 13^3 \cdot 17 \cdot 19 \cdot 23 \cdot 29 \cdot 31\cdot 41\cdot 47\cdot 59\cdot 71,
\end{gather}
was uncovered by Fischer and Griess, independently, in November of 1973 (see \cite[\S~15]{MR671653}).
It was called the {\em monster} by Conway, and it was conjectured (see \cite[\S~1]{MR554399} and \cite{MR0399248}) that it should have an irreducible representation of dimension $196883 = 47\cdot 59\cdot 71$.
On the basis of this the entire $194\times 194$ character table of the monster was computed (see \cite{MR827219} and also \cite[\S~15]{MR671653}) by Fischer, Livingstone and Thorne.

Then, in November of 1978 (see \cite[Prologue]{MR2215662}), McKay noticed that $196884=1+196883$ is the coefficient of $q$ in the $q$-expansion of the $j$-function, and suggested to Thompson that this might be significant. 
After further similar coincidences \cite{MR554402} were revealed\footnote{McKay mentions in his CRM-Fields Prize Lecture (see \cite{McKay_CRMFieldsPrizeLecture_Audio}) that it was Bernd Fischer's daughter (name unknown to the authors) that first looked at relating coefficients of $j$ to the monster, beyond the coefficient of $q$.},
Thompson proposed (see \cite{MR554401} and \cite[\S~1]{MR554399}) that there should be an infinite-dimensional graded $\MM$-module $V = \bigoplus_{n\geq -1} V_n$ that explains them.
The {\em McKay--Thompson series}
\begin{gather}
\label{eqn:sumtrgVnqn}
T_g(\tau):=\sum_{n\geq -1} {\rm tr}(g| V_n) q^n
\end{gather}
(here $q=e^{2\pi i \tau}$ for $\tau \in \HH$) of the conjectural monster module $V$ were completely prescribed by Conway and Norton's {\em monstrous moonshine} conjectures \cite{MR554399}.
Most strikingly, they predicted that each $T_g$ should be the normalised Hauptmodul for a specific group $\G_g \subseteq \text{SL}_2(\RR)$ of genus zero. 

It was January of 1980 (see \cite{MR605419} and \cite[\S~1]{MR671653}) when Griess established the existence of $\MM$ by constructing it as automorphisms of a $196884$-dimensional (commutative but non-associative) algebra. 
It was November of 1983 when Frenkel, Lepowsky, and Meurman announced \cite{FLMBerk} (see also \cite{FLMPNAS,FLM}) the construction of a graded $\MM$-module $V^\natural= \bigoplus_{n\geq -1} V^\natural_n$ such that 
\begin{gather}\label{eqn:sumtrgVnaturalnqn}
T_g^\natural(\tau) := \sum_{n\geq -1} {\rm tr}(g| V^\natural_n)q^n
\end{gather}
coincides with $T_g(\tau)$ in \eqref{eqn:sumtrgVnqn} for various $g$, and in particular for $g=e$ the identity. 
Then, in 1992, Borcherds \cite{MR1172696} resolved the monstrous moonshine conjectures by ingeniously showing that $T^\natural_g=T_g$ for all $g\in \MM$.

In recent years the story of moonshine has expanded to include other finite simple groups and other types of modular objects, including Jacobi forms \cite{MR3271175,MR2802725}, skew-holomorphic Jacobi forms \cite{pmo,MR3521908}, Siegel modular forms \cite{MR2985326,MR4865807}, and mock modular forms \cite{MR3449012,MR3433373,MR3539377}. 
Moreover, relationships between graded representations of finite groups and invariants of arithmetic geometry have appeared \cite{MR4029712,MR4611829,MR4906047,2017NatCo...8..670D,MR4291251,MR4230542}.

However, we are interested in revisiting a geometric perspective that originated earlier, before any connection to coefficients of $j$ was proposed:
In January of 1975, Ogg observed \cite{MR417184} that the primes $p$ that divide $\#\MM$ are the same as those for which the characteristic-$p$ supersingular $j$-invariants are all defined over $\FF_p$, rather than only over $\FF_{p^2}$.  
This provides a geometric characterisation of the primes that appear in (\ref{eqn:ord}).
Motivated by the possibility that there might be more to learn from this point of view, we consider the following question in this work.
\begin{que}
{\em Can we characterise not only which primes divide $\#\MM$, but also their multiplicities?}
\end{que}

We offer two answers to this question, for the primes greater than $3$.
Both take inspiration from Deligne's theorem on the $p$-adic rigidity of the $j$-function (see \cite{MR0294346,MR0437461}). 
One is formulated in terms of modular functions of prime-power level, and the other is formulated purely in terms of supersingular elliptic curves (or purely in terms of $j$, see Remark \ref{rmk:alt}). 
Our methods don't quite work for the cases $p=2,3$. 
We leave the treatment of these exceptional primes to future work.

To state our results define $J_N$ to be the normalised Hauptmodul for $\Gamma_0(N)$ if $\Gamma_0(N)$ has genus zero, and set $J_N:=0$ otherwise.
Define $J_{N+}$ analogously for $\Gamma_0(N)^+$.
Further, define $\gt{S}_p$ to be the set of $j$-invariants of supersingular elliptic curves in characteristic $p$, and write 
\begin{gather}\label{eqn:gtSp1gtSp2}
\gt{S}_p = \gt{S}_p^1 \sqcup \gt{S}_p^2
\end{gather} 
for the decomposition of $\gt{S}_p$ into those $j$-invariants ($\gt{S}_p^1$) that lie in $\FF_p$, and those ($\gt{S}_p^2$) that lie in $\FF_{p^2}$ but not in $\FF_p$.
Also define $\moa_p$ to be the minimum order of an automorphism group of a supersingular elliptic curve in characteristic $p$,
\begin{gather}\label{eqn:int:moap}
	\moa_p :=\min\left\{ \#\Aut(E) \mid E/K \text{ supersingular, } \chr(K) = p \right\}.
\end{gather}
See (\ref{eqn:moap}) for the value of $\moa_p$ in case $\#\gt{S}_p^2=0$.
We have $\moa_p=2$ in case $\gt{S}_p^2$ is not empty.

Write $\val_p(x)$ for the $p$-adic valuation of $x$.  
For $f$ a $q$-series with integer coefficients, $\val_p(f)$ denotes the minimum $p$-adic valuation of the coefficients of $f$.  
We prove the following two characterisations of $\val_p(\#\MM)$, for $p>3$.
\begin{thm}\label{thm:main}
Let $p$ be a prime greater than $3$.
Then we have
\begin{gather}\label{eqn:main}
\val_p(\#\MM) = \val_p(J_1-J_{p+}) + \val_p(J_1 - J_p) + \val_p(J_1 - J_{p^2}). 
\end{gather}
\end{thm}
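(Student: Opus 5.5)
The plan is to reduce \eqref{eqn:main} to a finite verification and then handle the finitely many remaining primes. First I will dispose of the generic case. Since $J_1=j-744=q^{-1}+O(q)$ and every normalised Hauptmodul also has the form $q^{-1}+O(q)$, each difference $J_1-J_X$ with $J_X\neq0$ has vanishing $q^{-1}$ coefficient, so its valuation is not automatically $0$. When $J_X=0$, however, the difference is $J_1$ itself, and $\val_p(J_1)=0$.

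By Ogg's observation, $\Gamma_0(p)^+$ has genus zero exactly for the primes dividing $\#\MM$. Also, $\Gamma_0(p)$ has genus zero for $p>3$ only when $p\in\{5,7,13\}$, and $\Gamma_0(p^2)$ has genus zero for $p>3$ only when $p=5$. Hence for $p>3$ not dividing $\#\MM$ all three terms on the right of \eqref{eqn:main} are $0=\val_p(\#\MM)$. For a monstrous prime $p\notin\{5,7,13\}$, the right-hand side reduces to $\val_p(J_1-J_{p+})$, which must be shown to equal $1$ for $p\in\{17,19,23,29,31,41,47,59,71\}$ and $2$ for $p=11$. The cases $p=5,7,13$ require all of the relevant terms, with targets $9=\cdot+\cdot+\cdot$, $6$ and $3$.

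For each remaining prime and each relevant $X\in\{p+,p,p^2\}$, I will compute $\val_p(J_1-J_X)$ in two halves.

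\emph{Upper bound.} Exhibit an explicit coefficient of $J_1-J_X$ whose $p$-adic valuation equals the claimed value. This is a direct $q$-expansion computation: $J_p$ and $J_{p^2}$ are eta quotients plus constants, and $J_{p+}$ has known expressions via eta quotients or theta series.

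\emph{Lower bound.} I need every coefficient to be divisible by $p^{k}$. The function $F=J_1-J_X$ is a weakly holomorphic modular function for $\Gamma_0(p^e)$, with integer coefficients. Its poles lie only at cusps other than $\infty$, since the $q^{-1}$ terms cancel. I will multiply $F$ by a suitable power of an eta quotient $\eta(\tau)^{a}\eta(p^e\tau)^{b}$ that has integer coefficients and leading coefficient $1$, and which vanishes to high enough order at the other cusps. This gives a holomorphic modular form $G$ of some weight $k$ on $\Gamma_0(p^e)$ with integral coefficients. Since multiplication by a unit-leading integral series does not change the minimal valuation, $\val_p(G)=\val_p(F)$. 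Sturm's theorem then says $G\equiv0\pmod{p^k}$ as soon as its coefficients up to $\tfrac{k}{12}[\SL_2(\ZZ):\Gamma_0(p^e)]$ vanish mod $p^k$. Note that Sturm's bound is usually stated for forms mod a prime, but it applies to $p^k$ by induction, dividing by $p$ each time. This leaves a finite computation.

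Alternatively, for the Fricke case, $J_1-J_{p+}$ can be related to $j(\tau)+j(p\tau)$. Writing $j$ as a rational function of the Hauptmodul and using Deligne-type congruences modulo $p$ would give a more conceptual lower bound, at least for the exponent-$1$ primes.

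I expect the main obstacle to be the prime $p=5$. There the three terms must sum to $9$, which requires high $5$-adic divisibility of $J_1-J_{25}$ and $J_1-J_5$. The lower-bound step then needs care: the cusps of $\Gamma_0(25)$ other than $\infty$ must be controlled, and the weight of the auxiliary form, and hence the Sturm bound, grows. I would first try to avoid Sturm's theorem entirely. The idea is to express $j$ as an explicit polynomial in $t=\eta(\tau)^6/\eta(25\tau)^6$-type Hauptmoduln, for instance $j=(t^2+250t+3125)^3/t^5$ for level $5$, and then read off the $5$-adic valuation of $j-t-744-\dots$ directly from the coefficients of this rational expression. This makes the lower bound an algebraic identity rather than a numerical check. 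The same strategy handles $p=7$ and $p=13$.
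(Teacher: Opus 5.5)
Your reduction is sound and takes a genuinely different route from the paper: you turn each term into a Sturm-bound verification, while the paper never computes with $q$-expansions of $J_{p+}$.

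\textbf{Primes $p\geq 11$ dividing $\#\MM$.} Here the paper argues structurally. It proves $J_1-J_{p+}=pJ_{p+}|U_p$ by level lowering, which already gives the lower bound $1$ you hoped to obtain ``conceptually'' via $j(\tau)+j(p\tau)$. It then shows $\val_p(J_{p+}|U_p)=\val_p(J_1|U_p)$. Finally it uses Deligne's $p$-adic rigidity, via Dwork and Koike, to write $pJ_1|U_p$ as a combination of the $(J_1-\hat\alpha)^{-n}$ over supersingular values. The coefficient valuations are controlled, and sharp for $n=1$. A Vandermonde argument then gives exactly $1$ whenever some supersingular value differs from $j=0,1728$, and the $j=1728$ term gives exactly $2$ at $p=11$.

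\textbf{The primes $p=5,7,13$.} Your explicit identities such as $j=(t^2+250t+3125)^3/t^5$ are essentially Proposition~\ref{pro:valpJ1Jp} written out. The paper instead uses integrality of the $p$-th Faber polynomial and $\val_p(s_p^2)=d_p$. Your concerns about $p=5$ are misplaced. The split is $9=3+5+1$, so $J_1-J_{25}$ contributes only $1$. The Hauptmodul for $\G_0(25)$ is $\eta(\tau)/\eta(25\tau)$, not a sixth power. Multiplying by $\Delta$ always suffices, because $\Delta$ vanishes at each non-infinite cusp to at least the order of the pole of $J_1-J_X$ there. This gives weight $12$, with Sturm bound $p+1$ on $\G_0(p)$ and $30$ on $\G_0(25)$, so nothing grows.

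\textbf{Comparison.} Your route buys elementarity and independence from Deligne, but it explains nothing. The paper's route ties the exponents to supersingular data, which is what makes Theorem~\ref{thm:main-3d} possible.

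\textbf{What is still missing.} As written, you have not determined a single valuation. What you have is a correct reduction to a finite computation, which needs $q$-expansions of $J_{p+}$ for $p$ up to $71$. That computation must still be carried out before this is a proof.
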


\begin{thm}\label{thm:main-3d}
Let $p$ be a prime greater than $3$.
Then we have
\begin{gather}\label{eqn:main-3d}
\val_p(\#\MM) = 
\begin{cases}
	\frac32 \moa_p&\text{ if $\#\gt{S}_p^1=1$ and $\#\gt{S}_p^2=0$,}\\
	\frac12 \moa_p&\text{ if $\#\gt{S}_p^1>1$ and $\#\gt{S}_p^2=0$,}\\
	0& \text{ if $\#\gt{S}_p^2>0$.}
\end{cases}
\end{gather}
\end{thm}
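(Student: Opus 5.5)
We prove Theorem~\ref{thm:main-3d} by direct verification. We use Ogg's characterisation of the primes dividing $\#\MM$ and the classical description of supersingular $j$-invariants and their automorphism groups. We do not pass through Theorem~\ref{thm:main}, although the two statements could be compared afterwards as a consistency check.

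The first step is to dispose of the third case. Ogg's observation \cite{MR417184} gives that $\gt{S}_p^2=\emptyset$ exactly when $p$ divides $\#\MM$. Here we take this as the known equivalence: $\gt{S}_p\subseteq\FF_p$ iff the genus of $X_0(p)^+$ is zero, iff $p\in\{2,3,5,7,11,13,17,19,23,29,31,41,47,59,71\}$. Comparing with \eqref{eqn:ord}, $\val_p(\#\MM)=0$ whenever $\#\gt{S}_p^2>0$. It therefore remains to treat the eleven primes $p\in\{5,7,11,13,17,19,23,29,31,41,47,59,71\}$ greater than $3$, for which $\gt{S}_p=\gt{S}_p^1$.

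The second step is to compute $\gt{S}_p$ and $\moa_p$ for these primes. Since $p>3$, every elliptic curve has $\#\Aut(E)\in\{2,4,6\}$. Moreover $\#\Aut(E)=6$ exactly when $j=0$, and $\#\Aut(E)=4$ exactly when $j=1728$. The value $j=0$ is supersingular iff $p\equiv 2\pmod 3$, and $j=1728$ is supersingular iff $p\equiv 3\pmod 4$. The count $\#\gt{S}_p$ is given by Deuring's formula $\lfloor p/12\rfloor+\epsilon_p$, where $\epsilon_p\in\{0,1,2\}$ according to $p\bmod 12$; this is equivalent to the Eichler mass formula $\sum_{E}1/\#\Aut(E)=(p-1)/24$. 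Combining these facts:
\begin{itemize}
\item For $p=5$ we have $\gt{S}_5=\{0\}$, so $\moa_5=6$ and $\tfrac32\cdot 6=9$.
\item For $p=7$ we have $\gt{S}_7=\{1728\}$, so $\moa_7=4$ and $\tfrac32\cdot 4=6$.
\item For $p=13$ we have a single supersingular value $j=5$ with $\#\Aut=2$, giving $\tfrac32\cdot 2=3$.
\item For $p=11$ we have $\gt{S}_{11}=\{0,1728\}$, so $\moa_{11}=4$ and $\tfrac12\cdot 4=2$.
\item For every $p\geq 17$ in the list, $\#\gt{S}_p\geq 2$ and some supersingular $j$ is different from $0$ and $1728$ (by the count), so $\moa_p=2$ and $\tfrac12\moa_p=1$.
\end{itemize}
These values match the exponents $9,6,2,3,1,\dots,1$ in \eqref{eqn:ord}, which completes the verification.

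The only step needing care is the claim that $\#\gt{S}_p^1=1$ occurs only for $p\in\{5,7,13\}$ among $p>3$, together with the correct identification of $\moa_p$ in each case. We handle this uniformly with the mass formula. If $\#\gt{S}_p=1$ with automorphism group of order $a$, then $1/a=(p-1)/24$, so $p=1+24/a\in\{5,7,13\}$ for $a\in\{6,4,2\}$. Conversely, if $\#\gt{S}_p\geq 2$, the mass formula forces a generic $j$ (one with $\#\Aut=2$) as soon as $(p-1)/24>1/4+1/6$, which holds for all $p\geq 17$; the case $p=11$ is checked directly. Everything else is routine table-checking.
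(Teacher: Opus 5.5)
Your proof is correct and is essentially the paper's argument. Ogg's theorem disposes of the case $\#\gt{S}_p^2>0$, and the remaining primes are checked one by one against (\ref{eqn:ord}) using the classification of automorphism groups of elliptic curves; note there are thirteen such primes, not eleven as you wrote. The only difference is that you use the Eichler--Deuring mass formula to isolate the singleton cases $p\in\{5,7,13\}$ and to force $\moa_p=2$ for the larger primes, whereas the paper reads these facts off Table~\ref{tab:ssm=1} together with the genus-zero criteria for $\G_0(p)$ and $\G_0(p)^+$.
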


\begin{rmk}\label{rmk:dsc}
We compute the right-hand side of (\ref{eqn:main}) for all $p$ in \S~\ref{sec:proof}, so we can detail the discrepancy in (\ref{eqn:main}-\ref{eqn:main-3d}) for $p=2,3$:
For $p=2$ the right-hand sides of (\ref{eqn:main}-\ref{eqn:main-3d}) both evaluate to $36$, whereas $\val_2(\#\MM)=46$, while for $p=3$ the right-hand sides of (\ref{eqn:main}-\ref{eqn:main-3d}) both evaluate to $18$, whereas $\val_3(\#\MM)=20$.
In particular, the right-hand sides of (\ref{eqn:main}-\ref{eqn:main-3d}) agree for all $p$.
\end{rmk}

\begin{rmk}\label{rmk:alt}
We may alternatively formulate Theorem \ref{thm:main-3d} purely in terms of the $j$-function, because it turns out that $\moa_p = \val_p (j|V_p - \Phi_p(j))$ for all $p$ prime, for $\Phi_p$ the $p$-th Faber polynomial. 
\end{rmk}

\begin{rmk}\label{rmk:frc}
Ogg \cite{MR417184} showed that $\Gamma_0(p)^+$ has genus zero exactly when $\gt{S}_p^2$ is empty.  
It develops (see Proposition \ref{pro:J1Jp} and the proof of Theorem \ref{thm:main-3d}) that for such $p$ we have
\begin{gather}\label{eqn:wppgenus0}
\val_p(J_1-J_{p+}) = \frac12\moa_p=\left\lceil\frac{12}{p-1}\right\rceil.
\end{gather}
On a similar note, $\Gamma_0(p)$ has genus zero precisely when $\gt{S}_p$ is a singleton, which is precisely when $p-1$ divides $12$.
For such $p$ we show (see Proposition \ref{pro:J1Jp}) that 
\begin{gather}\label{eqn:wpgenus0}
\val_p(J_1-J_{p}) = \frac{12}{p-1} + \left\lceil \frac{12}{p+1}\right\rceil.
\end{gather}
We can similarly treat the last term in (\ref{eqn:main}): The group $\Gamma_0(p^2)$ has genus zero just for the $p$ such that $p^2-1$ divides $24$, and for these primes (see Proposition \ref{pro:J1Jp2}) we have
\begin{gather}\label{eqn:wp2genus0}
\val_p(J_1-J_{p^2}) = \frac{24}{p^2-1}.
\end{gather}
\end{rmk}

The remainder is organised as follows. 
In Section \ref{sec:pre} we prove lemmas about modular functions and 
Hauptmoduls which enable our later analysis.  In Section \ref{sec:Deligne} we use a result of Deligne to determine the $p$-divisibility of $J_1|U_p$.  In Section \ref{sec:proof} we prove Theorems \ref{thm:main} and \ref{thm:main-3d}.
We acknowledge here that many of the arguments we present may be replaced by brute-force computation. 
We prefer to present a more organic and self-contained approach, in part because this relieves the reader from a computational burden, but also because we are motivated by the grand challenge of explaining how the monster arises from more natural mathematical objects.
We may not know yet what arguments will achieve this goal, but we can probably expect the more general ones to move us closer.

%--------------------------------------------------------------------------------------------------------------------------------%
\section*{Acknowledgement}
%--------------------------------------------------------------------------------------------------------------------------------%

J.D.\ gratefully acknowledges support from Academia Sinica (AS-IA-113-M03) and the National Science and Technology Council of Taiwan (112-2115-M-001-006-MY3).
H.S.\ was supported by the U.S. National Science Foundation grant DMS-2101906.

%--------------------------------------------------------------------------------------------------------------------------------%
\section{Preparations}\label{sec:pre}
%--------------------------------------------------------------------------------------------------------------------------------%

\subsection{Groups and functions} \label{sec:modfns}
We first recall that the {\em modular group} $\G_0(1):=\SL_2(\ZZ)$ is generated by $W_1:=\left( \begin{smallmatrix} 0 & -1 \\ 1 & 0\end{smallmatrix} \right)$ and $T:=\left( \begin{smallmatrix} 1 & 1 \\ 0 & 1\end{smallmatrix} \right)$.  
Next, for $N$ a positive integer, we set $V_N := \frac{1}{\sqrt{N}}\left( \begin{smallmatrix} N & 0 \\ 0 & 1\end{smallmatrix} \right)$, and define the {\em Fricke involution} 
\begin{gather}\label{eqn:WN}
W_N:=W_1V_{N} = V_N^{-1}W_1 = \frac{1}{\sqrt{N}}
\left(
\begin{smallmatrix}
	0 & -1 \\
	N & 0 
\end{smallmatrix} 
\right).
\end{gather}  
The $N$-th {\em Hecke congruence subgroup} is $\G_0(N) :=\G_0(1)\cap V_N^{-1}\G_0(1)V_{N^{}}$.
We also use $\G_0(N)^+ :=\langle \G_0(N),W_N\rangle = \G_0(N)\cup \G_0(N)W_N$.

For $\G=\G_0(N)$ or $\G=\G_0(N)^+$ the {\em cusps} of $\G$ are the orbits of $\G$ on $\QQ\cup\{\infty\}$.
Each cusp is represented by a rational number $\frac{a}{b}$, where 
$\gcd(a,b)=1$ and $b|N$. 
Moreover, $\frac{a}{b}$ and $\frac{a'}{b'}$ define the same cusp of $\G_0(N)$ if and only if $b=b'$ and $a\equiv a' \xmod \gcd(b,\frac{N}{b})$
(see e.g.\ \cite[Prop.~2.2.3]{MR1628193}).
The action of $W_N$ fuses the cusps represented by $\frac{a}{b}$ and $\frac{-ab}{N}$. 
The {\em infinite cusp} is represented by $\infty$ and by $\frac{1}{N}$.

Given $p$ prime and $b$ coprime to $p$ choose $b'$ such that $bb'\equiv 1\xmod p$, and set 
\begin{gather}\label{eqn:Xpb}
\Xp^b=V_p^{-1}T^bW_1T^{b'}V_p.
\end{gather}
Then $\G_0(p^2)\Xp^b$ is independent of the choice of $b$ and $b'$ mod $p$, and we have the following.
\begin{lem}\label{lem:Xp}
For $p$ prime and $b$ coprime to $p$, the infinite cusp of $\G_0(p^2)$ is mapped by $\Xp^b$ to $\frac bp$, 
and the $\G_0(p^2)\Xp^{b}$ for $0<b<p$ are the non-trivial right cosets of $\G_0(p^2)$ in $\G_0(p)$.
\end{lem}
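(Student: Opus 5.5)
Compute $\Xp^b$ explicitly. Up to the scalar $\pm 1$, $T^bW_1T^{b'} = \left(\begin{smallmatrix} b & bb'-1 \\ 1 & b'\end{smallmatrix}\right)$. Conjugating by $V_p$ gives
\begin{gather*}
\Xp^b = V_p^{-1}\left(\begin{smallmatrix} b & bb'-1 \\ 1 & b'\end{smallmatrix}\right)V_p = \left(\begin{smallmatrix} b & (bb'-1)/p \\ p & b'\end{smallmatrix}\right),
\end{gather*}
which lies in $\SL_2(\ZZ)$ because $p \mid bb'-1$. Since its lower-left entry is $p$, it lies in $\G_0(p)$ but not in $\G_0(p^2)$. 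The infinite cusp $\infty$ is sent to $\frac{b}{p}$, and $\gcd(b,p)=1$, which proves the first assertion.

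For independence of the choices, suppose we replace $b$ and $b'$ by $b+kp$ and $b'+lp$. Then $T^{b+kp}W_1T^{b'+lp}=T^{kp}(T^bW_1T^{b'})T^{lp}$, so
\begin{gather*}
\Xp^{b+kp}=(V_p^{-1}T^{kp}V_p)\,\Xp^b\,(V_p^{-1}T^{lp}V_p).
\end{gather*}
Here $V_p^{-1}T^{m}V_p=\left(\begin{smallmatrix}1&m/p\\0&1\end{smallmatrix}\right)$, so with $m=kp$ or $lp$ we obtain integer translations $T^k$ and $T^l$. Thus $\Xp^{b+kp}=T^k\Xp^bT^l$. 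The left factor $T^k$ lies in $\G_0(p^2)$, which is harmless for the right coset $\G_0(p^2)\Xp^b$. The right factor $T^l$ is the delicate point. I would handle it by checking directly that $\Xp^bT^l(\Xp^b)^{-1}\in\G_0(p^2)$. Its lower-left entry is computed from $\left(\begin{smallmatrix} b&\ast\\ p&b'\end{smallmatrix}\right)$: the matrix $\Xp^b T^l (\Xp^b)^{-1}$ equals $I + l\left(\begin{smallmatrix} b\\ p\end{smallmatrix}\right)(-p,\ b)$, whose lower-left entry is $-lp^2$. Hence $\Xp^bT^l=g\,\Xp^b$ with $g\in\G_0(p^2)$, and the coset is indeed independent of the choices.

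For the coset statement, I use $[\G_0(p):\G_0(p^2)]=p$. This follows from $[\G_0(1):\G_0(N)]=N\prod_{\ell\mid N}(1+1/\ell)$, which gives the ratio $p^2(1+1/p)/(p(1+1/p))=p$. So there are exactly $p-1$ non-trivial right cosets. The matrices $\Xp^b$ with $0<b<p$ give $p-1$ elements of $\G_0(p)\setminus\G_0(p^2)$, so it suffices to show their cosets are distinct. If $\G_0(p^2)\Xp^b=\G_0(p^2)\Xp^c$, then $\Xp^b(\Xp^c)^{-1}\in\G_0(p^2)$. Using $(\Xp^c)^{-1}=\left(\begin{smallmatrix} c'&\ast\\ -p&c\end{smallmatrix}\right)$, the lower-left entry of the product is $pc'-b'p=p(c'-b')$. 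This is divisible by $p^2$ only if $b'\equiv c'$ mod $p$, hence $b\equiv c$ mod $p$, so $b=c$.

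Alternatively, one can argue via cusps: right multiplication by an element of $\G_0(p^2)$ does not change the image cusp, and by the stated cusp criterion the cusps $\frac bp$ of $\G_0(p^2)$ are distinct for distinct $b$ mod $p$. However, the direct matrix computation is cleaner. The only step I expect to need care with is the right-multiplication issue in the independence claim, which the conjugation computation above resolves.
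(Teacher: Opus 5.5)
Your proof is correct and complete. The paper states Lemma~\ref{lem:Xp} without proof. It only asserts, just before the statement, that $\G_0(p^2)\Xp^b$ is independent of the choices of $b$ and $b'$ mod $p$. Your argument supplies the omitted verification:
\begin{itemize}
\item the explicit formula $\Xp^b=\left(\begin{smallmatrix} b & (bb'-1)/p\\ p & b'\end{smallmatrix}\right)$;
\item the reduction $\Xp^{b+kp}=T^k\Xp^bT^l$, together with the check that $\Xp^bT^l(\Xp^b)^{-1}$ has lower-left entry $-lp^2$;
\item the index $[\G_0(p):\G_0(p^2)]=p$;
\item the coset test via the lower-left entry $p(c'-b')$ of $\Xp^b(\Xp^c)^{-1}$.
\end{itemize}
The paper's surrounding text, which quotes the cusp-equivalence criterion, suggests the authors had the cusp route in mind: distinct $b$ give inequivalent cusps $\frac bp$, hence distinct cosets. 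Your lower-left-entry test is equivalent but self-contained, and it also settles the well-definedness claim, which the cusp argument alone does not address.

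One correction to your alternative remark. What preserves the image cusp is \emph{left} multiplication by $\G_0(p^2)$, since $g\Xp^b\infty=g(\tfrac bp)$. That is the relevant operation for right cosets $\G_0(p^2)\Xp^b$. Right multiplication can change the image cusp for $p>3$, because $\G_0(p^2)$ is not normal in $\G_0(p)$. For example, take $p=5$, $b=b'=1$ and $\gamma=\left(\begin{smallmatrix}2&1\\25&13\end{smallmatrix}\right)\in\G_0(25)$. Then $\Xp^b\gamma\infty=\tfrac{2}{35}$, which equals $\left(\begin{smallmatrix}3&-2\\50&-33\end{smallmatrix}\right)(\tfrac45)$ and so is the cusp $\tfrac45$, not $\tfrac15$. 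With ``right'' replaced by ``left'', your alternative argument is also valid.
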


Write $\HH$ for the complex upper half-plane.
For $\G=\G_0(N)$ or $\G=\G_0(N)^+$ the cusps of $\G$ naturally compactify the Riemann surface structure on the orbit space $\G\backslash\HH$
(see e.g.\ \cite[\S~1.3]{MR1291394}). 
The {\em genus} of $\G$ is the genus of this surface.
Using e.g.\ \cite[Prop. 1.40, 1.43]{MR1291394} it can be shown that
\begin{gather}\label{eqn:genusG0p}
	\genus(\G_0(p^v))
	=
	\begin{cases}
	0&\text{ if $v\leq 2$ and $p\in\{2,3\}$,}
	\\
	\frac{p-a}{12}&\text{ if $v=1$ and $p\equiv a \xmod 12$ for $a \in\{-1,5,7,13\}$,}
	\\
	\frac{p-a}{12}
	+
	\frac{(p-5)(p-1)}{12}
	&\text{ if $v=2$ and $p\equiv a \xmod 12$ for $a \in\{-1,5,7,13\}$.}
	\end{cases}
\end{gather}

In this work, a {\em modular function} is a holomorphic function on 
$\HH$ that is $\G_0(M)$-invariant for some $M>0$.
Given a modular function $f$, let $c_n(f)$ denote the coefficient of $q^n$ in the $q$-expansion of $f$, so that $f(\tau) = \sum_n c_n(f) q^n$.  
Here $q=\ex(\tau)$, for $\tau\in \HH$, where $\ex(x):=e^{2\pi i x}$.
The operator $U_N$ is defined by
\begin{equation}\label{eqn:Udef}
f|U_N = \frac{1}{N} \sum_{b=0}^{N-1} f|V_N^{-1}T^b,
\end{equation}
and satisfies 
$c_n(f|U_N) = c_{nN}(f)$.

The {\em Dedekind eta function} is $\eta(\tau):=q^{\frac{1}{24}}\prod_{n>0}(1-q^n)$.
Fix an integer $N>1$ and let $d=d_N$ be the least positive integer such that both 
$d(N-1)\equiv 0 \xmod{24}$ and $N^{d}$ is a square.  Then 
\begin{equation}\label{eqn:tN}
\jj_N(\tau) := \frac{\eta(\tau)^{d}}{\eta(N\tau)^{d}}
\end{equation}
is a modular function for $\G_0(N)$ (see e.g.\ \cite[Thm. 1.64]{MR2020489}),  
and the $q$-expansion of $\jj_N$ has the form
\begin{gather}\label{eqn:tNexp}
\jj_N(\tau) = q^{-n}\left(1-dq+\frac{d(d-3)}{2}q^2 + dq^N + \mathcal{O}(q^3)\right),
\end{gather}
where $d=d_N$ and $n=n_N:=\frac{d(N-1)}{24}$.

According to (\ref{eqn:tNexp}) the function $\jj_N$ has a simple pole at 
the infinite cusp of $\G_0(N)$ just for the $N$ such that $N-1$ divides $24$, namely,
\begin{gather}\label{eqn:nN=1}
N\in \{2,3,4,5,7,9,13,25\}.
\end{gather}
For the behavior of $t_N$ near non-infinite cusps we have the following.
\begin{lem}\label{lem:bdd}
For $N>1$, the function $\jj_N$ is bounded away from the infinite cusp of $\G_0(N)$ if and only if $N$ is prime, or the square of a prime.
\end{lem}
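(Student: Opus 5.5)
Since $t_N$ is a quotient of eta products, we can read off its order of vanishing at every cusp of $\G_0(N)$ from Ligozat's criterion (e.g.\ \cite[Thm.~1.65]{MR2020489}). For $\eta(\tau)^d\eta(N\tau)^{-d}$, the order at a cusp represented by $\frac{a}{c}$ with $c\mid N$ is
\begin{gather*}
\frac{N}{24\gcd(c^2,N)\,c}\cdot d\left(\gcd(c,1)^2 - \frac{\gcd(c,N)^2}{N}\right)
= \frac{N d}{24\gcd(c^2,N)\,c}\left(1-\frac{c^2}{N}\right),
\end{gather*}
up to a positive factor (the width). So the order has the sign of $N-c^2$. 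The function $t_N$ is holomorphic on $\HH$, since $\eta$ has no zeros there. Hence $t_N$ is bounded away from the infinite cusp exactly when its order is $\geq 0$ at every cusp other than $\infty$. The infinite cusp corresponds to $c=N$, where the order is negative, consistent with (\ref{eqn:tNexp}). So the claim reduces to the following: $c^2\le N$ for every divisor $c$ of $N$ with $c\neq N$ if and only if $N$ is a prime or the square of a prime.

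For the reduction, first I will confirm that the cusp representatives $\frac{a}{c}$, with $c$ running over the divisors of $N$ (as recalled in the text above), exhaust the cusps. Then I will check that "bounded near a cusp" is equivalent to "nonnegative order at that cusp". This holds because near each cusp the function has a $q$-expansion in the local parameter, with leading exponent given by the order.

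The number-theoretic step goes as follows. If $N=p$, the divisors other than $N$ are just $c=1$, and $1<p$. If $N=p^2$, they are $c=1$ and $c=p$, with $c^2\le p^2$; the case $c=p$ gives order zero, so $t_N$ is bounded but nonvanishing there. Conversely, suppose $N$ is neither a prime nor a prime square. If $N=p^k$ with $k\ge3$, take $c=p^{k-1}$; then $c^2=p^{2k-2}>p^k$. Otherwise $N$ has at least two distinct prime factors; take $c=N/\ell$ with $\ell$ the smallest prime factor. Then $c^2>N$ iff $N>\ell^2$, which holds because $N=\ell m$ with $m>\ell$: here $m$ has a prime factor at least the second prime, and if $m=\ell$ then $N$ would be a prime square. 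At such a $c$ the order is negative, so $t_N$ has a pole at a non-infinite cusp and is unbounded there.

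The main obstacle is getting Ligozat's order formula correctly normalised. In particular, I need to make sure the positive factors (the cusp width $N/\gcd(c^2,N)$ and the denominator $24$) cannot change the sign. Since only the sign of $N-c^2$ matters, I will be careful to state it in exactly that form and not track the exact orders.
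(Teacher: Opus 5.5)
Your proof is correct and follows the paper's argument. Both use Ligozat's formula \cite[Thm.~1.65]{MR2020489}, which gives the order of $\jj_N$ at the cusp $\frac{a}{c}$ as $\frac{d_N}{24\gcd(c,N/c)}\left(\frac{N}{c}-c\right)$, whose sign is that of $N-c^2$. Both then reduce to the elementary fact that $c^2\le N$ for every proper divisor $c$ of $N$ exactly when $N$ is a prime or a prime square, and you verify that fact more explicitly than the paper does.

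Your displayed prefactor is off by a factor of $c$: it should be $\frac{N}{24\gcd(c,N/c)\,c}$ rather than $\frac{N}{24\gcd(c^2,N)\,c}$, since $\gcd(c^2,N)=c\gcd(c,N/c)$. As you anticipated, this positive factor does not affect the sign, so the argument stands.
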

\begin{proof}
Let $b$ be a divisor of $N$ and let $a$ be an integer coprime to $b$. 
Using \cite[Thm.~1.65]{MR2020489} we find that the $q$-expansion of $t_N$ at the cusp represented by $\frac{a}{b}$ 
vanishes to order
\begin{gather}\label{eqn:tNord}
	\frac{d_N}{24\gcd(b,\frac{N}{b})}
	\left(\frac{N}{b}-b\right).
\end{gather}
In particular, $t_N$ is bounded near the cusp represented by $\frac{a}{b}$ if and only if $b\leq \frac{N}{b}$. 
The result now follows from the observation that $b^2\leq {N}$ for every proper divisor $b$ of $N$, if and only if $N$ is prime, or the square of a prime.
\end{proof}

From (\ref{eqn:tNexp}) and Lemma \ref{lem:bdd} we see that $n_N=1$, and $t_N$ is a Hauptmodul for $\G_0(N)$, and in particular $\G_0(N)$ has genus zero, when $N-1$ divides $24$. 
Using (\ref{eqn:genusG0p}) we obtain the following.
\begin{lem}\label{lem:G0pp2genuszero}
For $p$ prime, if $N=p$ or $N=p^2$ then $\G_0(N)$ has genus zero if and only if $N-1$ divides $24$. 
\end{lem}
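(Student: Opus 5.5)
The plan is to read off the genus directly from the formula (\ref{eqn:genusG0p}) and compare with the divisibility condition $N-1\mid 24$, treating small primes separately.

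\emph{Small primes.} For $p\in\{2,3\}$, (\ref{eqn:genusG0p}) says $\G_0(p)$ and $\G_0(p^2)$ have genus zero. On the other side, $N\in\{2,3,4,9\}$ gives $N-1\in\{1,2,3,8\}$, each dividing $24$. So both sides of the equivalence hold, and it remains to treat $p\geq 5$.

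\emph{The case $N=p$, $p\geq 5$.} Write $p\equiv a \xmod{12}$ with $a\in\{-1,5,7,13\}$, so that $\genus(\G_0(p))=\frac{p-a}{12}$.
\begin{itemize}
\item Since $p\geq 5$ and $p\equiv a \xmod{12}$, we have $p\geq a$ in each residue class. Hence the genus vanishes exactly when $p=a$, that is, when $p\in\{5,7,13\}$; note $p=-1$ is impossible and $p=11$ gives genus $1$.
\item Among primes $p\geq 5$, the condition $p-1\mid 24$ means $p-1\in\{4,6,12,24\}$. This gives $p\in\{5,7,13\}$, since $25$ is not prime.
\end{itemize}
The two lists agree.

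\emph{The case $N=p^2$, $p\geq 5$.} Here (\ref{eqn:genusG0p}) gives
\[
\genus(\G_0(p^2))=\frac{p-a}{12}+\frac{(p-5)(p-1)}{12}.
\]
Both summands are nonnegative for $p\geq 5$. So the genus vanishes if and only if $p=a$ and $p=5$, which means exactly when $p=5$. On the other side, $p^2-1\mid 24$ with $p\geq 5$ forces $p^2-1=24$, so again $p=5$, giving $N=25$. The two sides match.

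The only point needing care is checking that $p\geq a$ in every residue class. The rest is bookkeeping against the list (\ref{eqn:nN=1}). This also recovers the observation made just above, that $t_N$ is a Hauptmodul for exactly these $N$.
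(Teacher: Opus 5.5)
Your argument is correct and is essentially the paper's: the lemma is read off from (\ref{eqn:genusG0p}), the only difference being that the paper has already obtained the ``if'' direction from $t_N$ being a Hauptmodul (via (\ref{eqn:tNexp}) and Lemma \ref{lem:bdd}), whereas you read both directions off the formula. There are two harmless slips: the divisors of $24$ that are at least $4$ also include $8$, so $p=9$ should be discarded as non-prime too; and your closing remark overreaches, since a genus count alone cannot show that the specific function $t_N$ is a Hauptmodul---that requires the pole analysis of (\ref{eqn:tNexp}) and Lemma \ref{lem:bdd}.
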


The normalised Hauptmodul for $\G_0(1)$ is 
\begin{gather}\label{eqn:J1}
J_1(\tau)=j(\tau) - 744= q^{-1}+196884q+\mathcal{O}(q^2),
\end{gather}
while for $N$ such that $N-1$ divides $24$ the normalised Hauptmodul for $\G_0(N)$ is 
\begin{equation}\label{eqn:JNdef}
J_N = \jj_N - c_0(\jj_N)=\jj_N+d_N, 
\end{equation}
where $d_N = \frac{24}{N-1}$.
From (\ref{eqn:tNexp}) and (\ref{eqn:J1}-\ref{eqn:JNdef}) we infer that 
\begin{gather}\label{eqn:c1JN}
	c_1(J_1-J_N) = 
	\begin{cases}
		196884 - \frac{d(d-1)}{2} & \text{ for $N=2$,}\\
		196884 - \frac{d(d-3)}{2}  & \text{ for $N=3,4,5,7,9,13,25$,}
	\end{cases}
\end{gather}
where $d=d_N$. 
For future reference we record the $d_N$ for $N>1$ such that $n_N=1$, and the prime decompositions of the corresponding $c_1(J_1-J_N)$, in Table \ref{tab:c1J1JN}.

\begin{table}[h!] 
\caption{ 
The 
$d_N$ 
and 
$c_1(J_1-J_N)$,
for $N>1$ such that $n_N=1$.\label{tab:c1J1JN}}
\vspace{-6pt}
\begin{tabular}{c|ccccc}
\toprule
$N$ & $2$ & $3$ & $5$ & $7$ & $13$  \\ 
\midrule
$d_N$ & $24$ & $12$ & $6$ & $4$ & $2$  \\
$c_1(J_1-J_N)$ & $2^{16}\cdot 3$ & $2\cdot 3^9\cdot 5$ & $3^2\cdot 5^5\cdot 7$ & $2\cdot 7^4\cdot 41$ & $5\cdot 13^2\cdot 233$ \\ 
\bottomrule
\toprule
$N$ &  $2^2$ & $3^2$ & $5^2$ \\ 
\midrule
$d_N$ & $8$ & $3$ & $1$ \\
$c_1(J_1-J_N)$ & $2^{8}\cdot 769$ & $2^2\cdot 3^3\cdot 1823$ & $5\cdot 13^2\cdot 233$ \\ 
\bottomrule
\end{tabular}
\end{table}

From (\ref{eqn:WN}) 
and (\ref{eqn:tNexp})
and 
the formula
$\eta(W_1\tau)=\sqrt{-i\tau}\eta(\tau)$ 
we obtain that
\begin{equation}\label{eqn:sNdef}
s_N(\tau):=(\jj_N | W_N)(\tau) 
= N^\frac{d}{2} \jj_N(\tau)^{-1}
 = N^{\frac{d}{2}}q^n + \mathcal{O}(q^{n+1}),
\end{equation}
where $d=d_N$ and $n=n_N$. 
Thus $\jj_{N+}:=\jj_N + s_N$ is a modular function for $\G_0(N)^+$ with a pole of order $n_N$ 
at the infinite cusp. 
In particular, 
\begin{equation}\label{eqn:JN+def}
J_{N+} = \jj_N - c_0(\jj_N) + N^\frac{d}{2}\jj_N^{-1}
	= J_N + N^\frac{d}{2}\jj_N^{-1}
\end{equation}
is the normalised Hauptmodul for $\G_0(N)^+$, where $d=d_N$
(cf.\ (\ref{eqn:JNdef})).

\subsection{Level lowering}\label{sec:pre-inv}
We define and apply some level-lowering operators  
in this section.
Our first result demonstrates the level-lowering nature of $U_N$ (\ref{eqn:Udef}).
\begin{lem}\label{lem:levlowUp}
For $N$ a positive integer, 
if $f$ is modular for $\G_0(N^2)$ then $f|U_N$ is modular for $\G_0(N)$.
\end{lem}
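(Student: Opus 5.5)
We prove Lemma \ref{lem:levlowUp} by direct conjugation: the plan is to show that for every $\gamma\in\G_0(N)$ the family of matrices $\{V_N^{-1}T^b\}_{0\le b<N}$ appearing in (\ref{eqn:Udef}) is permuted by right multiplication by $\gamma$, up to left multiplication by elements of $\G_0(N^2)$. Concretely, for each $b$ we will find $b'\in\{0,\dots,N-1\}$ and $\delta_b\in\G_0(N^2)$ with
\begin{gather*}
V_N^{-1}T^b\gamma=\delta_b V_N^{-1}T^{b'},
\end{gather*}
and such that $b\mapsto b'$ is a bijection of $\ZZ/N\ZZ$. Then $f|U_N|\gamma=\frac1N\sum_b f|\delta_bV_N^{-1}T^{b'}=\frac1N\sum_{b'}f|V_N^{-1}T^{b'}=f|U_N$, using $\G_0(N^2)$-invariance of $f$. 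Holomorphy on $\HH$ is clear since each summand is $f$ composed with a M\"obius transformation preserving $\HH$.

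For the key computation, write $\gamma=\left(\begin{smallmatrix}a&\beta\\ Nc&e\end{smallmatrix}\right)$ with $ae-N\beta c=1$. Up to the scalar factors (which act trivially), $V_N^{-1}T^b$ corresponds to $\left(\begin{smallmatrix}1&b\\0&N\end{smallmatrix}\right)$, and
\begin{gather*}
\left(\begin{smallmatrix}1&b\\0&N\end{smallmatrix}\right)\gamma=\left(\begin{smallmatrix}a+bNc&\beta+be\\ N^2c&Ne\end{smallmatrix}\right).
\end{gather*}
We want this to equal $\delta\left(\begin{smallmatrix}1&b'\\0&N\end{smallmatrix}\right)$, i.e.\ $\delta=\left(\begin{smallmatrix}a+bNc&\beta+be\\ N^2c&Ne\end{smallmatrix}\right)\left(\begin{smallmatrix}1&-b'/N\\0&1/N\end{smallmatrix}\right)=\left(\begin{smallmatrix}a+bNc&(\beta+be-b'(a+bNc))/N\\ N^2c&e-Ncb'\end{smallmatrix}\right)$. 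This has determinant $1$ and lower-left entry divisible by $N^2$, so it lies in $\G_0(N^2)$ precisely when the upper-right entry is integral, i.e.\ when $b'(a+bNc)\equiv\beta+be\pmod N$, i.e.\ $b'a\equiv \beta+be\pmod N$. Since $ae\equiv1\pmod N$, $a$ is a unit mod $N$, so $b'\equiv a^{-1}(\beta+be)\equiv e(\beta+be)\pmod N$ is uniquely determined, and $b\mapsto e\beta+e^2b$ is an affine bijection of $\ZZ/N\ZZ$ because $e$ is a unit.

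The only mildly delicate point, which I expect to be the main thing to check carefully, is bookkeeping of the normalisation $\frac1{\sqrt N}$ in $V_N$ and the convention for the slash action (right action, weight zero), so that scalars drop out and the composition order $f|AB=(f|A)|B$ is respected; the arithmetic above is otherwise routine.
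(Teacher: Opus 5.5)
Your proof is correct and is essentially the paper's argument: the paper first uses $T$-invariance of $f|U_N$ to reduce to $\gamma\in V_N\G_0(N^2)V_N^{-1}$, where the summands $f|V_N^{-1}T^b$ are permuted via $b\mapsto bd^2$, while you handle all of $\G_0(N)$ at once through the affine permutation $b\mapsto e\beta+e^2b$. Your explicit computation also shows that only invertibility of the lower-right entry mod $N$ is needed, which is what the paper's last step actually requires; its stated reason $d\equiv 1 \pmod N$ is not true in general, and $\gcd(d,N)=1$ is what is used.
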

\begin{proof}
Observe first that if $A=\left(\begin{smallmatrix}a&b\\ * &* \end{smallmatrix}\right)\in\G_0(N)$ then 
$T^{-ab}A\in\G_0(N,N):= V_N\G_0(N^2)V_N^{-1}$. 
Thus, since $f|U_N$ is $T$-invariant, it suffices to show that $f|U_N$ is $\G_0(N,N)$-invariant.
Next note that for $A=\left(\begin{smallmatrix}*&*\\ * &d \end{smallmatrix}\right)\in\G_0(N,N)$ and arbitrary $b$ we have 
$B_b:=V_N^{-1}T^bAT^{-bd^2}V_N \in \G_0(N^2)$.
Using this we compute
\begin{gather}\label{eqn:levlowUp}
	Nf|U_NA = 
	\sum_{b\xmod N} f|V_N^{-1}T^bA
	= \sum_{b\xmod N} f|B_bV_N^{-1}T^{bd^2}
	=\sum_{b\xmod N} f|V_N^{-1}T^b,
\end{gather}
where the last equality in (\ref{eqn:levlowUp}) holds because 
$B_b\in\G_0(N^2)$ and 
$d\equiv 1 \xmod N$.
We conclude from (\ref{eqn:Udef}) that $f|U_NA=f|U_N$.
\end{proof}

\begin{rmk}
A similar computation to (\ref{eqn:levlowUp}) shows that if $f\in \G_0(MN^2)$ then $f|U_N\in\G_0(MN)$, for arbitrary positive integers $M$ and $N$.
\end{rmk}

Recall $X_p^b$ from (\ref{eqn:Xpb}).
For $p$ prime and $f$ a modular function for $\G_0(p^2)$ we now define 
\begin{gather}\label{eqn:Xp}
f|\Xp:=f + \sum_{0<b<p}f|\Xp^b.
\end{gather} 
Then Lemma \ref{lem:Xp} shows that 
$f|\Xp$ is well-defined, and also proves the following. 
\begin{lem}\label{lem:levlowXp}
For $p$ prime, if $f$ is modular for $\G_0(p^2)$ then $f|\Xp$ is modular for $\G_0(p)$.
\end{lem}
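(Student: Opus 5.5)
Lemma \ref{lem:levlowXp} follows from Lemma \ref{lem:Xp} by the standard trace (norm-free sum) argument. Take $f$ modular for $\G_0(p^2)$ and $\gamma\in\G_0(p)$. I will show $(f|\Xp)|\gamma=f|\Xp$.

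\emph{Choice of coset representatives.} By Lemma \ref{lem:Xp}, the identity together with the $\Xp^b$ for $0<b<p$ is a complete set of representatives for the right cosets $\G_0(p^2)\backslash\G_0(p)$. Implicit here, and worth checking first, is that each $\Xp^b$ lies in $\G_0(p)$. Writing out $V_p^{-1}T^bW_1T^{b'}V_p$ gives the integer matrix $\left(\begin{smallmatrix} b & (bb'-1)/p\\ p & b'\end{smallmatrix}\right)$, which has determinant $1$ and lower-left entry divisible by $p$. So the right-hand side of (\ref{eqn:Xp}) is a sum of $f|\delta$ over $\delta$ running through one representative of each coset.

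\emph{Independence of representatives.} If $\delta'=\beta\delta$ with $\beta\in\G_0(p^2)$, then $f|\delta'=(f|\beta)|\delta=f|\delta$, since $f$ is $\G_0(p^2)$-invariant. Hence $f|\Xp=\sum_{\delta\in\G_0(p^2)\backslash\G_0(p)}f|\delta$ is well defined, independent of the representatives chosen. This also covers the remark that it does not depend on the choices of $b,b'$ mod $p$.

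\emph{Invariance.} Right multiplication by $\gamma\in\G_0(p)$ permutes the right cosets of $\G_0(p^2)$ in $\G_0(p)$. Therefore $\{\delta\gamma\}$ is again a complete system of representatives, and
\begin{gather*}
(f|\Xp)|\gamma=\sum_{\delta}f|\delta\gamma=\sum_{\delta'}f|\delta'=f|\Xp .
\end{gather*}
Holomorphy on $\HH$ is preserved by each slash with an element of $\SL_2(\RR)$, so $f|\Xp$ is holomorphic and $\G_0(p)$-invariant, that is, modular for $\G_0(p)$.

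\emph{Main obstacle.} The only real work is the content of Lemma \ref{lem:Xp}, which we may assume: that the $\Xp^b$ represent the $p$ nontrivial cosets, the index $[\G_0(p):\G_0(p^2)]$ being $p$. The rest is the formal argument above.
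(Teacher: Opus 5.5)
Your proposal is correct and matches the paper's argument. The paper simply observes that Lemma \ref{lem:Xp} (the identity together with the $\Xp^b$, $0<b<p$, represents the right cosets of $\G_0(p^2)$ in $\G_0(p)$) makes $f|\Xp$ a well-defined sum over $\G_0(p^2)\backslash\G_0(p)$, which is therefore $\G_0(p)$-invariant; your write-up spells out exactly this trace argument. It also adds a correct explicit check, $\Xp^b=\left(\begin{smallmatrix} b & (bb'-1)/p\\ p & b'\end{smallmatrix}\right)\in\G_0(p)$, of a detail the paper leaves implicit.
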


Next we use $U_p$ and $W_p$ together, to lower levels from $\G_0(p)$ to $\G_0(1)$. 
\begin{lem}\label{lem:levlowUpWp}
If $f$ is 
modular 
for $\G_0(p)$ then $pf|U_p+f|W_p$ is 
modular 
for $\G_0(1)$.
\end{lem}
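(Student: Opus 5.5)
Since $\G_0(1)$ is generated by $T$ and $W_1$, and $pf|U_p+f|W_p$ is obviously a modular function (holomorphic on $\HH$), it is enough to show that $F:=pf|U_p+f|W_p$ is invariant under $T$ and under $W_1$. Here slash actions are weight zero, and scalar matrices act trivially. The plan is to write $F$ as a sum over a set of $p+1$ matrices that represent the right cosets $\G_0(p)\backslash\G_0(1)$, up to an overall $V_p$-type rescaling, and then check that right multiplication by $T$ or $W_1$ permutes these cosets.

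\textbf{Coset description.} Using (\ref{eqn:Udef}) we have $pf|U_p=\sum_{b\xmod p} f|V_p^{-1}T^b$, and by (\ref{eqn:WN}) we have $f|W_p=f|V_p^{-1}W_1$. So
\begin{gather*}
F=\sum_{b\xmod p} f|V_p^{-1}T^b + f|V_p^{-1}W_1 = g|\Big(\textstyle\sum_{b\xmod p}T^b+W_1\Big),
\end{gather*}
where $g:=f|V_p^{-1}$. The function $g$ is invariant under $V_p\G_0(p)V_p^{-1}=\G^0(p)$, the group of matrices in $\SL_2(\ZZ)$ with upper-right entry divisible by $p$. It is a standard fact that $\{T^b\}_{0\le b<p}\cup\{W_1\}$ is a complete set of right coset representatives for $\G^0(p)$ in $\SL_2(\ZZ)$. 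To see this, note that $\G^0(p)$ is the stabiliser of the line spanned by $(1,0)$ in $\FF_p^2$ under the right action on row vectors. There are $p+1$ such lines, and these $p+1$ matrices send $(1,0)$ to $(1,b)$ and to $(0,-1)$, which are distinct lines. Hence $F=\sum_{\gamma\in \G^0(p)\backslash\SL_2(\ZZ)} g|\gamma$ is a trace from $\G^0(p)$ to $\SL_2(\ZZ)$.

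\textbf{Invariance.} For any $A\in\SL_2(\ZZ)$, the maps $\gamma\mapsto\gamma A$ permute the right cosets. Each term $g|\gamma$ depends only on the coset $\G^0(p)\gamma$, by the $\G^0(p)$-invariance of $g$. Therefore $F|A=F$. I will also note the concrete check for $T$: it cycles the $T^b$, and $W_1T=T^{-1}\cdot(TW_1T)$ lies in the coset of $W_1$ modulo $\G^0(p)$, because $W_1TW_1^{-1}$ is lower triangular unipotent and so lies in $\G^0(p)$. This concrete check is not needed once the abstract trace argument is in place.

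\textbf{Main obstacle.} The only delicate step is the careful verification that $g$ is $\G^0(p)$-invariant, which amounts to checking that conjugation by $V_p$ carries $\G_0(p)$ onto $\G^0(p)$. One also has to keep track of the right-action convention, so that $(f|A)|B=f|AB$ and $V_p^{-1}W_1=W_p$ match the definitions in (\ref{eqn:WN}). Once these are in place, the coset count $p+1=[\SL_2(\ZZ):\G^0(p)]$ finishes the proof.
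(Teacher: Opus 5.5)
Your proof is correct. It is a cleaner repackaging of the paper's argument rather than a new idea, so a brief comparison is useful.

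The paper also checks invariance under $T$ and $W_1$, but does it by hand. $T$-invariance is dispatched directly. For $W_1$, it splits $h$ into two parts:
\begin{itemize}
\item $f|V_p^{-1}+f|W_p$, which $W_1$ visibly swaps;
\item the remaining $p-1$ terms, which it handles through the explicit identity $V_p^{-1}T^bW_1=\Xp^bV_p^{-1}T^{-b'}$ with $\Xp^b\in\G_0(p)$, so that $W_1$ permutes them via $b\mapsto -b'$.
\end{itemize}

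You instead identify $F$ as the trace of $g=f|V_p^{-1}$ from $\G^0(p)=V_p\G_0(p)V_p^{-1}$ to $\SL_2(\ZZ)$. You certify the coset representatives via the action on lines in $\FF_p^2$. The paper's matrix identity is exactly your statement that $W_1$ sends the line through $(1,b)$ to the line through $(b,-1)$, which is the line through $(1,-b')$.

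What each route buys:
\begin{itemize}
\item Your version makes the reduction to generators unnecessary: invariance under every element of $\SL_2(\ZZ)$ follows at once from the coset permutation. It also adapts readily to the Atkin--Lehner generalisation in the remark after the lemma.
\item The paper's explicit computation introduces the matrices $\Xp^b$, which it reuses in Lemma~\ref{lem:ptp2kUpconst} and Proposition~\ref{pro:valpJpJp2}.
\end{itemize}

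A minor point: in your optional $T$-check, the rewriting $W_1T=T^{-1}\cdot(TW_1T)$ plays no role. What you actually use is $W_1T=(W_1TW_1^{-1})W_1$ with $W_1TW_1^{-1}\in\G^0(p)$.
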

\begin{proof}
Set $h:=pf|U_p+f|W_p$. 
Then $h$ is $T$-invariant because $W_p$ normalises $\G_0(p)$.
To show that $h$ is $W_1$-invariant
we write $h=h_1+h_2$, where 
$h_1 = \sum_{0<b<p} f|V_p^{-1}T^b$
and
$h_2 = f|V_p^{-1} + f|W_p$. 

Now $h_2$ is $W_1$-invariant because $W_p=V_p^{-1}W_1$ (cf.\ (\ref{eqn:WN})). 
For $h_1$ we have
\begin{gather}\label{eqn:levlowUpWp}
	h_1|W_1 = \sum_{0<b<p} f|V_p^{-1}T^bW_1 = \sum_{0<b<p}f|\Xp^bV_p^{-1}T^{-b'}=\sum_{0<b<p}f|V_p^{-1}T^{-b'}
\end{gather}
(see (\ref{eqn:Xpb})).
We conclude that $h_1|W_1=h_1$ since the assignment $b\mapsto -b'$ defines a bijection on the non-zero elements of $\ZZ\xmod p$.
\end{proof}

\begin{rmk}
A similar computation to (\ref{eqn:levlowUpWp}) shows that if $f\in \G_0(Mp)$ for $M$ a positive integer coprime to $p$, and if $W_p$ is taken to be a suitably chosen Atkin--Lehner involution for $\G_0(Mp)$, then $pf|U_p+f|W_p$ belongs to $\G_0(M)$.
\end{rmk}

The next result is essentially \cite[Lem.~2.1]{MR3435726}. It can also be found in \cite[p.~318]{MR554399}
\begin{lem}\label{lem:levlowJp+}
For $p$ prime such that $\G_0(p)^+$ has genus zero,  
$J_1-J_{p+} = p J_{p+}|U_p$. 
\end{lem}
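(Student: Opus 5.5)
The approach is to apply Lemma \ref{lem:levlowUpWp} to $f:=J_{p+}$, which is modular for $\G_0(p)^+$ and hence for $\G_0(p)$, and then use the fact that a holomorphic $\G_0(1)$-invariant function with at most a simple pole at the cusp is an affine function of $J_1$. Since $W_p\in\G_0(p)^+$, we have $f|W_p=f$, so Lemma \ref{lem:levlowUpWp} shows that
\[
h:=pJ_{p+}|U_p+J_{p+}
\]
is modular for $\G_0(1)$.

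The next step is to control the poles of $h$. On $\HH$, $h$ is holomorphic, because $J_{p+}$ is holomorphic there and $U_p$ is a finite sum of translates. Because $\G_0(p)^+$ has genus zero, $J_{p+}$ is a Hauptmodul with its only pole a simple pole at the infinite cusp. Since $\G_0(p)^+$ has a single cusp (the cusps $0$ and $\infty$ of $\G_0(p)$ are fused by $W_p$), $J_{p+}$ is bounded at every rational point.

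At $\infty$ the expansions are as follows. We have $J_{p+}=q^{-1}+\mathcal{O}(q)$, and $c_n(J_{p+}|U_p)=c_{np}(J_{p+})$ vanishes for $n\le 0$, since $c_0(J_{p+})=0$ and $c_{-p}(J_{p+})=0$. Hence $h=q^{-1}+\mathcal{O}(q)$ at the infinite cusp. Because $h$ is $\SL_2(\ZZ)$-invariant, the infinite cusp is its only cusp, so $h$ has only a simple pole with principal part $q^{-1}$ and zero constant term.

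By the standard uniqueness argument, $h-J_1$ is then a holomorphic modular function for $\G_0(1)$ that is bounded at the cusp, hence constant. Its constant term is $c_0(h)-c_0(J_1)=0$, so $h=J_1$. This gives $J_1-J_{p+}=pJ_{p+}|U_p$.

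The only step needing care is checking that $h$ is genuinely holomorphic and has no extra poles. This amounts to verifying that the terms $f|V_p^{-1}T^b$ do not acquire poles at cusps. I would handle it by noting that each such term is $J_{p+}$ composed with an element of $\mathrm{GL}_2^+(\QQ)$, whose poles lie only at cusps (rational points or $\infty$). Invariance of $h$ under $\G_0(1)$ then reduces all these cusps to $\infty$, where the $q$-expansion computation above controls everything.
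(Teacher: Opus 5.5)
Your proposal is correct and is essentially the paper's proof: apply Lemma \ref{lem:levlowUpWp} to $J_{p+}$, use $J_{p+}|W_p=J_{p+}$, and compare the $q$-expansion $q^{-1}+\mathcal{O}(q)$ with $J_1$. One harmless slip, which your argument never uses: $\G_0(p)^+$ has a single cusp containing all of $\QQ\cup\{\infty\}$, so $J_{p+}$ has a pole at every rational point rather than being bounded there; all you actually need is the $\SL_2(\ZZ)$-invariance of $h$ together with its expansion at $\infty$.
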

\begin{proof}
Lemma \ref{lem:levlowUpWp} tells us that $pJ_{p+}|U_p+J_{p+}|W_p$ is $\G_0(1)$-invariant, and therefore a polynomial in $J_1$. 
Since 
$J_{p+}|U_p=\mathcal{O}(q)$ and $J_{p+}|W_p=J_{p+}=q^{-1}+\mathcal{O}(q)$, 
we have $pJ_{p+}|U_p+J_{p+}=J_1$.
\end{proof}

Lemma \ref{lem:levlowJp+} has the following consequence.
\begin{lem}\label{lem:vJ1vJpp}
For $k\geq 0$, and $p$ prime such that $\G_0(p)^+$ has genus zero, 
$\val_p(J_1|U_p^k)=\val_p(J_{p+}|U_p^k)$.
\end{lem}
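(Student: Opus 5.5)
The plan is to apply $U_p^k$ to the identity of Lemma \ref{lem:levlowJp+}. Rearranged, that identity reads
\begin{gather*}
J_1 = J_{p+} + pJ_{p+}|U_p,
\end{gather*}
and since $U_p$ is linear and acts on coefficients via $c_n(f|U_p)=c_{np}(f)$, applying $U_p^k$ gives
\begin{gather*}
J_1|U_p^k = J_{p+}|U_p^k + p\,J_{p+}|U_p^{k+1}.
\end{gather*}
From this I will show that $\val_p(J_1|U_p^k)\geq \val_p(J_{p+}|U_p^k)$ and conversely.

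The key observation is that $J_{p+}|U_p^{k+1} = (J_{p+}|U_p^k)|U_p$. Its coefficients are therefore a subset of the coefficients of $J_{p+}|U_p^k$, namely those indexed by multiples of $p$. This gives
\begin{gather*}
\val_p(J_{p+}|U_p^{k+1})\geq \val_p(J_{p+}|U_p^k).
\end{gather*}
For the upper bound on $J_1|U_p^k$, combine this with $\val_p(pf)=\val_p(f)+1$ and the ultrametric inequality:
\begin{gather*}
\val_p(J_1|U_p^k)\geq \min\{\val_p(J_{p+}|U_p^k),\ 1+\val_p(J_{p+}|U_p^{k+1})\} = \val_p(J_{p+}|U_p^k).
\end{gather*}

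For the reverse inequality, rewrite the displayed relation as
\begin{gather*}
J_{p+}|U_p^k = J_1|U_p^k - p\,J_{p+}|U_p^{k+1}.
\end{gather*}
Set $a=\val_p(J_{p+}|U_p^k)$. The term $pJ_{p+}|U_p^{k+1}$ has valuation at least $a+1>a$. Choose a coefficient index $n$ where $c_n(J_{p+}|U_p^k)$ attains valuation exactly $a$. Then $c_n(J_1|U_p^k)$ differs from it by something of valuation at least $a+1$, so $c_n(J_1|U_p^k)$ has valuation exactly $a$. Hence $\val_p(J_1|U_p^k)\leq a$, and equality follows.

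Some bookkeeping is needed first. All series involved must have integer coefficients, so that $\val_p$ makes sense. For $J_1$ this is standard. For $J_{p+}$ it follows from (\ref{eqn:JN+def}) together with integrality of the eta quotient expansions, or equivalently from Lemma \ref{lem:levlowJp+} itself. The edge case $a=\infty$, where the series vanishes, is harmless: both sides are then infinite by the same relation. There is no real obstacle beyond noticing that $U_p$ cannot decrease the minimal valuation; that monotonicity is the one step that makes the extra factor $p$ decisive.
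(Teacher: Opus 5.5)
Your proof is correct and is exactly the paper's argument: apply $U_p^k$ to the identity $J_1=J_{p+}+pJ_{p+}|U_p$ of Lemma \ref{lem:levlowJp+}, then use $\val_p(pf|U_p)>\val_p(f)$ (your coefficient-selection remark justifies this) together with $\val_p(a+b)=\val_p(a)$ when $\val_p(a)<\val_p(b)$. The only slip is in your integrality aside, which the paper itself simply takes for granted, so the argument is unaffected: (\ref{eqn:JN+def}) describes $J_{p+}$ only when $p-1$ divides $24$ (not, e.g., for $p=11$), and the relation of Lemma \ref{lem:levlowJp+} alone cannot force integrality, since a formal solution of $g=-pg|U_p$ such as $\sum_{i\geq 0}(-p)^{-i}q^{p^i}$ has unbounded denominators.
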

\begin{proof}
By Lemma \ref{lem:levlowJp+} we have $J_1|U_p^k =J_{p+}|U_p^k + pJ_{p+}|U_p^{k+1}$. The claim follows because $\val_p(a+b)=\val_p(a)$ when $\val_p(a)<\val_p(b)$, and 
$\val_p(f)<\val_p(pf|U_p)$ 
for any modular function $f$.
\end{proof}

The following two lemmas will be used to constrain $J_{p^2}$.

\begin{lem}\label{lem:ptp2kUpconst}
For $p$ prime such that $\G_0(p)$ has genus zero, if $m$ is such that $0<m{n_{p^2}}<{p}$, then $pt_{p^2}^m|U_p$ is constant.
\end{lem}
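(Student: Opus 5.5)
We argue that $t_{p^2}^m|U_p$ is itself constant. It is a modular function for $\G_0(p)$ that is bounded at both cusps of $\G_0(p)$, so it is a holomorphic function on the compact Riemann surface obtained by compactifying $\G_0(p)\backslash\HH$, and any such function is constant. Multiplying by $p$ then gives the claim.

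Set $f:=t_{p^2}^m$. Since $\eta$ has no zeros on $\HH$, the eta quotient $t_{p^2}$, and hence $f$, is holomorphic on $\HH$. By (\ref{eqn:tN}), $f$ is a modular function for $\G_0(p^2)$, so Lemma \ref{lem:levlowUp} shows that $f|U_p$ is modular for $\G_0(p)$ and holomorphic on $\HH$. The cusps of $\G_0(p)$ are represented by $\infty$ and $0$, and we treat them separately.

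\emph{The cusp $\infty$.} By (\ref{eqn:tNexp}), $f=q^{-mn_{p^2}}(1+\mathcal{O}(q))$, so $c_k(f)=0$ for $k<-mn_{p^2}$. Since $c_k(f|U_p)=c_{kp}(f)$ and $mn_{p^2}<p$, every coefficient $c_k(f|U_p)$ with $k<0$ vanishes. Hence $f|U_p$ is bounded at $\infty$.

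\emph{The cusp $0$.} This is the main step. We study $f|U_pW_1$ near $\infty$, splitting the sum (\ref{eqn:Udef}) into the term $b=0$ and the terms $0<b<p$, as in the proof of Lemma \ref{lem:levlowUpWp}.
\begin{itemize}
\item For $0<b<p$, (\ref{eqn:Xpb}) gives $V_p^{-1}T^bW_1=\Xp^bV_p^{-1}T^{-b'}$. Therefore $f|V_p^{-1}T^bW_1=(f|\Xp^b)|V_p^{-1}T^{-b'}$. By Lemma \ref{lem:Xp}, the function $f|\Xp^b$ near $\infty$ is the expansion of $f$ at the cusp $\frac bp$ of $\G_0(p^2)$. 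This cusp is not the infinite cusp, so $t_{p^2}$, and hence $f$, is bounded there by Lemma \ref{lem:bdd}, since $p^2$ is the square of a prime. Applying $V_p^{-1}T^{-b'}$ preserves boundedness as $\tau\to i\infty$.
\item For $b=0$, we have $V_p^{-1}W_1=V_pV_{p^2}^{-1}W_1=V_pW_{p^2}$. So this term is obtained by evaluating $f|V_p$ at the image of $\infty$ under $W_{p^2}$. In terms of $f$ itself, it is $f$ near the cusp $0$ of $\G_0(p^2)$, rescaled by $V_p$. This is again a non-infinite cusp, so by Lemma \ref{lem:bdd} (for instance, the order (\ref{eqn:tNord}) with $b=1$ is positive) the term is bounded.
\end{itemize}
Thus every term of $f|U_pW_1$ is bounded as $\tau\to i\infty$. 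Equivalently, $f|U_p$ is bounded at the cusp $0$.

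It follows that $f|U_p$ is a modular function for $\G_0(p)$, holomorphic on $\HH$ and bounded at both cusps. It therefore extends to a holomorphic function on the compactified curve, and is constant by compactness. Hence $pt_{p^2}^m|U_p$ is constant.

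The genus-zero hypothesis appears to be needed only to ensure that $n_{p^2}$ is small enough for the range $0<mn_{p^2}<p$ to be nonempty; the argument above does not otherwise use it.
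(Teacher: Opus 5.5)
Your proof is correct, and its main computation is the same as the paper's. Both arguments get boundedness at $\infty$ from $mn_{p^2}<p$. Both get boundedness at the other cusp of $\G_0(p)$ by splitting the $U_p$-sum into the term $b=0$ (essentially $s_{p^2}^m$, the expansion of $t_{p^2}^m$ at the cusp $0$) and the terms $0<b<p$, which are rewritten via $\Xp^b$ and controlled by Lemma \ref{lem:bdd}. That you conjugate by $W_1$ rather than by $W_p=W_1V_p$ is immaterial.

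Where you differ is the final step. The paper uses the genus-zero hypothesis to write $pt_{p^2}^m|U_p=P(s_p)$, with $s_p$ a Hauptmodul for $\G_0(p)$. It then identifies $\deg P$ with the pole order at $\infty$ of $(pt_{p^2}^m|U_p)|W_p=P(t_p)$. You instead note that a $\G_0(p)$-invariant function that is holomorphic on $\HH$ and bounded at both cusps is a holomorphic function on the compact curve, hence constant.

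Your route is shorter and shows the hypothesis is unnecessary; the Hauptmodul description the paper invokes rests on the same compactness fact anyway. The paper's route keeps the argument in the Hauptmodul and Faber-polynomial language it reuses in Proposition \ref{pro:valpJ1Jp}.

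One correction to your closing remark: genus zero is not needed even to make the range of $m$ nonempty. For every prime $5\le p\le 23$ we have $n_{p^2}=\frac{p^2-1}{24}<p$. For instance, your argument gives constancy for $p=11$ and $m\in\{1,2\}$, although $\G_0(11)$ has genus one.
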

\begin{proof}
Set $f:=pt_{p^2}^m|U_p$.
Then $f$ is modular for $\G_0(p)$ according to Lemma \ref{lem:levlowUp}, and bounded near the infinite cusp according to the condition on $m$ (see (\ref{eqn:tNexp})).
By Lemma \ref{lem:G0pp2genuszero} we have that $s_p$ (see (\ref{eqn:sNdef})) is a Hauptmodul for $\G_0(p)$. 
Since $s_p$ is bounded near the infinite cusp we have
$f=P(s_p)$ for some polynomial $P(x)$.
Consider $f|W_p$. 
On the one hand $f|W_p = P(s_p)|W_p = P(t_p)$, so since $n_p=1$, the degree of $P(x)$ is the order of the pole in the $q$-expansion of $f|W_p$. 
On the other hand
\begin{gather}\label{eqn:ptp2kUpconst}
	f|W_p
	= t_{p^2}^m|V_p^{-1}W_p + \sum_{0<b<p} t_{p^2}^m|V_p^{-1}T^bW_p
	= s_{p^2}^m + \sum_{0<b<p}  t_{p^2}^m|\Xp^b V_{p}^{-1}T^{-b'}V_p
\end{gather}
(see (\ref{eqn:Xpb}) and (\ref{eqn:Udef}), and cf.\ (\ref{eqn:levlowUpWp})). 
Now $t_{p^2}^m|\Xp^b$, for $0<b<p$, is the expansion of $t_{p^2}^m$ at a non-infinite cusp of $\G_0(p^2)$ according to Lemma \ref{lem:Xp},
so $t_{p^2}^m|\Xp^b$ is bounded near the infinite cusp according to Lemma \ref{lem:bdd}.
Since 
$q|V_{p}^{-1}T^{-b'}V_p$ is simply 
$q$ times a $p$-th root of unity, 
no summand 
in (\ref{eqn:ptp2kUpconst}) 
has a pole in its $q$-expansion.
Thus $P(x)$ has degree $0$, and $f$ is constant. 
\end{proof}

\begin{lem}\label{lem:cnJp2}
For $p=2$ and $p=3$ we have
$c_n(J_{p^2})=0$ unless 
$n\equiv -1\xmod p$.
\end{lem}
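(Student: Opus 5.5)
For both primes I would use Lemma~\ref{lem:ptp2kUpconst} to control the coefficients $c_n$ with $n\equiv 0\xmod p$. For $p=3$ I would add a direct $q$-expansion argument to kill the remaining residue class $n\equiv 1\xmod 3$.

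\emph{The class $n\equiv 0\xmod p$.} For $p=2,3$ the group $\G_0(p)$ has genus zero, and $n_{p^2}=1$ because $p^2-1$ divides $24$. So Lemma~\ref{lem:ptp2kUpconst} applies with $m=1$, since $0<1<p$, and gives that $p\,t_{p^2}|U_p$ is constant. Since $c_n(t_{p^2}|U_p)=c_{pn}(t_{p^2})$, this means $c_{pn}(t_{p^2})=0$ for all $n\neq 0$. By (\ref{eqn:JNdef}) we have $J_{p^2}=t_{p^2}-c_0(t_{p^2})$, so $c_0(J_{p^2})=0$ and $c_n(J_{p^2})=c_n(t_{p^2})$ for $n\neq0$. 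Hence $c_n(J_{p^2})=0$ whenever $p\mid n$. For $p=2$ every $n$ is either even or $\equiv -1\xmod 2$, so this already completes the case $p=2$.

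\emph{The class $n\equiv 1\xmod 3$ for $p=3$.} Here $d_9=3$, so
\begin{gather*}
t_9(\tau)=q^{-1}\prod_{n>0}(1-q^n)^3\prod_{n>0}(1-q^{9n})^{-3}.
\end{gather*}
The second product is a power series in $q^9$, so it does not change exponents modulo $3$. For the first product I would apply Jacobi's identity
\begin{gather*}
\prod_{n>0}(1-q^n)^3=\sum_{k\geq0}(-1)^k(2k+1)q^{k(k+1)/2}.
\end{gather*}
The triangular numbers $k(k+1)/2$ modulo $3$ depend only on $k\xmod 3$. They take the values $0,1,0$ for $k\equiv 0,1,2$, and so never equal $2\xmod 3$. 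After multiplying by $q^{-1}$, every exponent appearing in $t_9$ is therefore $\equiv -1$ or $0\xmod 3$. In particular $c_n(t_9)=0$ for $n\equiv 1\xmod 3$. Combining this with the previous step gives $c_n(J_9)=0$ unless $n\equiv -1\xmod 3$.

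The only step needing any real input is the $n\equiv 1\xmod 3$ case. Lemma~\ref{lem:ptp2kUpconst} only controls the coefficients of index divisible by $p$, and the corresponding statement with $m=2$ does not obviously isolate the residue class $1\xmod 3$. So I would rely on the Jacobi triple product identity there rather than on level-lowering arguments.
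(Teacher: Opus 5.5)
Your proof is correct. For the class $n\equiv 0\xmod p$ it is exactly the paper's argument; the difference is in the class $n\equiv 1\xmod 3$ for $p=3$.

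The paper stays inside the level-lowering framework. It applies Lemma~\ref{lem:ptp2kUpconst} with both $m=1$ and $m=2$, and uses $J_9^2=t_9^2+6t_9+9$ (from $d_9=3$) to get $c_{3n}(J_9^2)=0$ for $n>0$. Because $c_0(J_9)=0$ and $c_{3k}(J_9)=0$ for $k>0$, the coefficient $c_{3n}(J_9^2)$ equals $2c_{3n+1}(J_9)$ plus products $c_a(J_9)c_b(J_9)$ with $a+b=3n$, $a,b$ in the classes $1,2\xmod 3$, and $a,b\leq 3n-1$. This is the triangular recursion (\ref{eqn:c3nJ92}), and induction from $c_1(J_9)=0$ finishes. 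So, contrary to your closing remark, the $m=2$ case does isolate the class $1\xmod 3$, once combined with the $m=1$ vanishing and an induction.

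Your use of Jacobi's identity for $\eta^3$ is shorter and completely explicit. However, it imports a classical identity tied to the specific exponent $d_9=3$. The paper's argument needs only Lemma~\ref{lem:ptp2kUpconst} and the leading coefficients of $t_9$, in line with the authors' stated preference for structural arguments.

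Your identity also gives more than you used. The terms of $q^{-1}\prod_{n>0}(1-q^n)^3$ with exponent divisible by $3$ come from $k=3j+1$. They have exponent $9j(j+1)/2$ and coefficient $-3(-1)^j(2j+1)$, so they sum to $-3\prod_{n>0}(1-q^{9n})^3$. After dividing by $\prod_{n>0}(1-q^{9n})^3$, the part of $t_9$ in the class $0\xmod 3$ is just the constant $-3$. So Jacobi's identity alone proves the case $p=3$, without Lemma~\ref{lem:ptp2kUpconst}.
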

\begin{proof}
To ease notation set $\cns n = c_n(J_{p^2})$.
Then $\cns n = c_n(t_{p^2})$ for $n>0$ by (\ref{eqn:JNdef}).
Since $n_{p^2}=1$ we may apply Lemma \ref{lem:ptp2kUpconst} with $0<m<p$.
Taking $m=1$ we obtain $\cns n =0$ for $n\equiv  0\xmod p$. 
This proves the claim for $p=2$.
For $p=3$ we need to additionally show that $\cns n=0$ for $n>0$ such that $n\equiv 1\xmod 3$. 
Since $d_9=3$ (see Table \ref{tab:c1J1JN}) we have 
$J_9^2 = t_9^2 + 6t_9  + 9$.
Applying Lemma \ref{lem:ptp2kUpconst} with 
$m=1$ and $m=2$ we thus obtain $c_{3n}(J_9^2) =0$ for $n>0$. 
It follows that
\begin{gather}\label{eqn:c3nJ92}
	2\cns{3n+1} =-\sum_{k=0}^{n-1}\left(  \cns{3k+1}\cns{3(n-k-1)+2} + \cns{3k+2}\cns{3(n-k-1)+1} \right)
\end{gather}
for $n>0$.
We have $\cns{1} =0$ by (\ref{eqn:tNexp}). The vanishing of $\cns{n}$ for $n>0$ such that $n\equiv 1\xmod 3$ now follows by induction on $n$ using (\ref{eqn:c3nJ92}).
\end{proof}

The remaining results in this section are used in the computation of 
$\val_p(J_1-J_p)$ (see Proposition \ref{pro:J1Jp}) and 
$\val_p(J_1-J_{p^2})$ (see Proposition \ref{pro:J1Jp2}).
We note that one may simply use the first few coefficients of a modular function to determine an upper bound on its $p$-divisibility. 
As mentioned in \S~\ref{sec:int}, we prefer to take an approach which highlights the underlying structure of these objects, with an eye toward a more general theory.

\begin{pro}\label{pro:valpJ1Jp}
For $p$ prime such that $\G_0(p)$ has genus zero,
$\val_p(J_1-J_p)=\val_p(c_1(J_1-J_p))$.
\end{pro}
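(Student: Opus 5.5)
The plan is to identify $J_1-J_p$ exactly as a constant multiple of an eta quotient. Concretely, I expect to show that
\begin{gather*}
J_1-J_p = c_1(J_1-J_p)\,\jj_p^{-1}.
\end{gather*}
Since $\jj_p^{-1}=\eta(p\tau)^{d}/\eta(\tau)^{d}=q\prod_{n>0}(1-q^{pn})^{d}(1-q^n)^{-d}$ has integer coefficients and leading coefficient $1$, we have $\val_p(\jj_p^{-1})=0$. The identity then gives $\val_p(J_1-J_p)=\val_p(c_1(J_1-J_p))$ immediately.

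To establish the identity, set $f:=J_1-J_p$. By Lemma \ref{lem:G0pp2genuszero} we have $p-1\mid 24$, so $n_p=1$, $d=d_p=\frac{24}{p-1}$, and $J_p=\jj_p+d$ is the normalised Hauptmodul (\ref{eqn:JNdef}). Thus $f$ is a modular function for $\G_0(p)$, holomorphic on $\HH$. At the infinite cusp the poles $q^{-1}$ cancel and both constant terms vanish, so $f=\mathcal{O}(q)$. The group $\G_0(p)$ has only the two cusps $\infty$ and $0$, interchanged by $W_p$. Since $J_1$ is $W_1$-invariant and $W_p=V_p^{-1}W_1$, we get $J_1|W_p=J_1|V_p^{-1}$, which is $J_1$ with $\tau$ replaced by $\tau/p$ up to the normalisation of $V_p$. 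I will double-check this cusp bookkeeping. Expanding at the other cusp via (\ref{eqn:sNdef}) gives
\begin{gather*}
f|W_p = J_1|W_p - s_p - d,
\end{gather*}
where $s_p=\mathcal{O}(q)$. Hence the only pole of $f$ on the compactified curve is at the cusp $0$, and it comes from $J_1$ alone.

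Next I express $f$ through the Hauptmodul $s_p=p^{d/2}\jj_p^{-1}$. This function is holomorphic on $\HH$ (eta has no zeros), vanishes at $\infty$, and has a single simple pole, located at the cusp $0$. A modular function on the genus-zero curve whose poles all lie at the pole of $s_p$ is a polynomial $P(s_p)$, and $\deg P$ equals the pole order of $f$ at $0$ measured in the local parameter there. I expect this to be the main obstacle: one must check that $J_1$ has a pole of order exactly $1$ at the cusp $0$ of $\G_0(p)$ in the correct local uniformiser, which has width $1$ after conjugating by $W_p$. The cleanest route is to compare both sides after applying $W_p$. On one side $P(s_p)|W_p=P(\jj_p)$, whose pole order in $q$ is $\deg P$ because $n_p=1$. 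On the other side the leading term of $J_1|W_p$ has the same pole order as $J_1$ itself, namely $1$, since $J_1$ is $\G_0(1)$-invariant and the cusp $0$ of $\G_0(p)$ has width $1$. So $\deg P\le 1$, i.e.\ $f=a s_p+b$.

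Finally, $f=\mathcal{O}(q)$ and $s_p=p^{d/2}q+\mathcal{O}(q^2)$ force $b=0$ and $a\,p^{d/2}=c_1(f)$. This gives $f=c_1(f)\,p^{-d/2}s_p=c_1(f)\,\jj_p^{-1}$, which is the identity above, and the claim follows.
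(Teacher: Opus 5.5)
Your argument fails at the degree bound $\deg P\le 1$, and the identity $J_1-J_p=c_1(J_1-J_p)\,t_p^{-1}$ you are aiming for is false.

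\textbf{The pole order at the cusp $0$.} The cusp $0$ of $\Gamma_0(p)$ has width $p$, not $1$, because the widths of $\infty$ and $0$ must add up to the index $p+1$. So $J_1$ has a pole of order $p$ there, not $1$. Concretely, the slash action is a right action, so $W_p=W_1V_p$ gives
\[
J_1|W_p=(J_1|W_1)|V_p=J_1|V_p,
\]
that is, $J_1(p\tau)=q^{-p}+\mathcal{O}(q^p)$. Your formula $J_1|W_p=J_1|V_p^{-1}$ composes the matrices in the wrong order. Hence
\[
P(t_p)=(J_1-J_p)|W_p=q^{-p}-d_p+\mathcal{O}(q),
\]
and $P$ is monic of degree $p$. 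For example, when $p=2$ the classical relation $j=(t_2+256)^3/t_2^2$ gives
\[
J_1-J_2=3\cdot 2^{16}\,t_2^{-1}+2^{24}\,t_2^{-2},
\]
which is not a multiple of $t_2^{-1}$. In general the term $s_p^p=p^{pd_p/2}t_p^{-p}$ is always present.

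\textbf{What the proof actually needs.} The higher powers of $s_p$ genuinely occur, so you must show they cannot lower the $p$-adic valuation. Your proposal supplies neither of the two ingredients this requires.

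First, integrality. $P(x)+d_p$ is the $p$-th Faber polynomial of $t_p$. It lies in $\ZZ[x]$ because $t_p\in q^{-1}(1+q\ZZ[[q]])$. Write $P(x)=a_1x+x^2P_*(x)$ with $P_*\in\ZZ[x]$. Then $c_1(J_1-J_p)=a_1p^{d_p/2}$ and $v_p(s_p^2P_*(s_p))\ge d_p$, so
\[
v_p(J_1-J_p)\ge\min\{v_p(c_1(J_1-J_p)),\,d_p\}.
\]

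Second, an arithmetic check: $d_p\ge v_p(c_1(J_1-J_p))$. This is read off Table~\ref{tab:c1J1JN}. There is no slack for $p=7$ and $p=13$, where it holds with equality ($d_7=4=v_7(2\cdot 7^4\cdot 41)$ and $d_{13}=2=v_{13}(5\cdot 13^2\cdot 233)$).

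This is exactly the paper's argument. The statement is not a formal consequence of a one-term eta-quotient identity.
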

\begin{proof}
We have $\val_p(J_1-J_p)\leq \val_p(c_1(J_1-J_p))$ by definition. 
For the reverse inequality, 
note first that $J_1-J_p$ is bounded near the infinite cusp of $\G_0(p)$, so, as in the proof of Lemma \ref{lem:ptp2kUpconst},
there is a polynomial $P(x)$, 
with vanishing constant term, such that $J_1-J_p = P(s_p)$.
Writing
\begin{gather}
(J_1-J_p)|W_p = P(t_p)=q^{-p}-d_p+\mathcal{O}(q)
\end{gather} 
we obtain that $P(x)+d_p$ is the $p$-th {\em Faber polynomial} for $t_p$ (see e.g.\ \cite[\S~3]{MR2904095}), and conclude
that $P(x)$ has integer coefficients. 

Now write $P(x)=a_1x+P_*(x)x^2$, where $P_*(x)\in\ZZ[x]$.
Then $\val_p(J_1-J_p)=\val_p(P(s_p))$ is bounded below by the minimum of $\val_p(a_1s_p)$ and $\val_p(P_*(s_p)s_p^2)$.
Applying (\ref{eqn:sNdef}) we obtain 
$\val_p(c_1(J_1-J_p)) = \val_p(c_1(a_1s_p))=\val_p(a_1s_p)$ 
and also 
$\val_p(P_*(s_p)s_p^2)\geq \val_p(s_p^2) = d_p$.
Thus 
$\val_p(J_1-J_p)$ is bounded below by the minimum of $\val_p(c_1(J_1-J_p))$ and $d_p$.
We check that $d_p\geq \val_p(c_1(J_1-J_p))$
using Table \ref{tab:c1J1JN}.
\end{proof}

\begin{pro}\label{pro:valpJpJp2}
For $p$ prime such that $\G_0(p^2)$ has genus zero,
$\val_p(J_p-J_{p^2}) = d_{p^2}$.
\end{pro}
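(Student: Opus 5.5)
The values of $d_{p^2}$ pin down what must be shown. By Lemma \ref{lem:G0pp2genuszero} and (\ref{eqn:nN=1}) the only primes in question are $p\in\{2,3,5\}$, with $d_{p^2}=8,3,1$ respectively. Write $d:=d_{p^2}$, $x:=\jj_{p^2}$ and $F:=J_p-J_{p^2}=\jj_p-x+d_p-d$.

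The upper bound $\val_p(F)\leq d$ will come from the coefficient of $q$. By (\ref{eqn:tNexp}), $c_1(\jj_N)=\frac{d_N(d_N-3)}2$, plus $d_N$ when $N=2$. This gives $c_1(F)=276-20=2^8$ for $p=2$, $54-0=2\cdot 3^3$ for $p=3$, and $9-(-1)=2\cdot 5$ for $p=5$. In each case $\val_p(c_1(F))=d$. So the real content is the lower bound $F\in p^{d}\ZZ[[q]]$, and I plan to obtain it by writing $F$ explicitly as a rational function of the Hauptmodul $x$.

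\emph{Step 1: locate the poles of $F$.} $F$ is modular for $\G_0(p^2)$ and vanishes at the infinite cusp, because both $J$'s are normalised. By Lemma \ref{lem:bdd}, $J_{p^2}$ is bounded away from $\infty$. The function $J_p$ has poles exactly at the cusps of $\G_0(p^2)$ lying over the infinite cusp of $\G_0(p)$, namely $\infty$ and the $\frac ap$ for $0<a<p$ (Lemma \ref{lem:Xp}). These cusps all have width $1$, and $n_p=1$, so each pole is simple. Hence $F$ is bounded except for simple poles at the $p-1$ cusps $\frac ap$. Since $x$ is a Hauptmodul with a simple pole at $\infty$, this gives
\begin{gather*}
F=\sum_{0<a<p}\frac{\beta_a}{x+\alpha_a},\qquad \alpha_a:=-x\left(\tfrac ap\right).
\end{gather*}
Equivalently $\jj_p=x^{p+1}/\prod_a(x+\alpha_a)$: the numerator accounts for the zero of $\jj_p$ at the cusp $0$, of order $p+1$ in the local parameter of $\G_0(p^2)$, and the condition $c_0(F)=0$ fixes the constants. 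For $p=2$ this gives $\jj_2=x^2/(x+16)$, and so $F=2^8/(x+16)$.

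\emph{Step 2: determine the $\alpha_a$ and $\beta_a$.} I will use the identity
\begin{gather*}
\jj_p(\tau)\,\jj_p(p\tau)=\frac{\eta(\tau)^{d_p}}{\eta(p^2\tau)^{d_p}}=x^{p+1},
\end{gather*}
which holds because $d_p=(p+1)d$. I will combine it with $W_{p^2}$, using $J_p|W_{p^2}(\tau)=(s_p+d_p)(p\tau)$ and $x|W_{p^2}=p^{d}x^{-1}$. Together these express $F|W_{p^2}$ as a polynomial in $s_{p^2}=p^dx^{-1}$ plus a constant. Comparing the two descriptions should force $\prod_a(x+\alpha_a)$ to be a polynomial with integer coefficients, with constant term $p^{(p+1)d/2}$ up to sign. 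It should also force $F=p^{d}G(x)/\prod_a(x+\alpha_a)$ with $G\in\ZZ[x]$ of degree less than $p-1$. For $p=3,5$ this is a finite check with a few coefficients.

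\emph{Step 3: conclude.} Since $x=q^{-1}+\mathcal O(1)\in q^{-1}\ZZ[[q]]$, the function $1/\prod_a(x+\alpha_a)$ equals $q^{p-1}$ times a unit of $\ZZ[[q]]$. Then $G(x)/\prod_a(x+\alpha_a)\in q\ZZ[[q]]$, so $\val_p(F)\geq d$, and combined with the $c_1$ computation this gives equality.

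The main obstacle is Step 2: showing that the numerator carries the full factor $p^{d}$, rather than just that the coefficients are integral. If the structural argument via $W_{p^2}$ stalls, I would fall back to computing the $\alpha_a$ directly. They are the values of $\jj_{p^2}$ at the cusps $\frac ap$, obtained via $X_p^b$ and the eta transformation law of \cite[Thm.~1.65]{MR2020489}. With these values the partial fractions can be written out for $p=3,5$, as was done above for $p=2$.
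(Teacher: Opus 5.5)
You have not established the key divisibility by $p^d$ for $p=3,5$, and the mechanism you propose for it does not work as stated. The rest is sound: the upper bound from $c_1(F)$ is correct, the partial-fraction shape of $F$ in Step~1 and all of Step~3 are fine, and $p=2$ is complete with $F=2^8/(x+16)$. Step~2 is left at ``should force'', and it has three problems.

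(i) The general formula is off by one. $t_p$ has a simple pole at $\infty$, and its zero at the cusp $0$ of $\Gamma_0(p^2)$ has order $p$, the ramification index of $X_0(p^2)\to X_0(p)$ there. So $t_p=x^p/P(x)$ with $P(x)=\prod_a(x+\alpha_a)$, exactly as in your own $p=2$ formula. Correspondingly $P(0)=p^{(p-1)d/2}$ (e.g.\ $16=2^4$), not $p^{(p+1)d/2}$.

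(ii) $F|W_{p^2}$ is not a polynomial in $s_{p^2}$ plus a constant. $W_{p^2}$ sends the cusp $a/p$ to $-a/p$, so $F|W_{p^2}$ again has simple poles at all $p-1$ cusps $a/p$. Indeed $J_p|W_{p^2}=p^{(p+1)d/2}/(xP(x))+d_p$.

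(iii) Most importantly, even the correct $W_{p^2}$-symmetry cannot supply the factor $p^d$. With $Q(x):=xP(x)=\sum_k c_kx^k$ it reads $x^{p+1}Q(p^d/x)=p^{(p+1)d/2}Q(x)$, which only gives $c_kp^{dk}=p^{(p+1)d/2}c_{p+1-k}$. This pairs off coefficients and is vacuous on the middle one. For $p=5$ the middle coefficient is the coefficient $e_2$ of $x^2$ in $P$. The numerator of $F$ has $x^3$-coefficient $25-e_2$, so it is divisible by $5$ only if $5\mid e_2$, and no symmetry argument gives you this.

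The repair uses only ingredients you already have. You never need the individual cusp values $\alpha_a$, which are irrational for $p=3,5$. Your identity gives $xP(x)=x^{p+1}/t_p=t_p(p\tau)$. This function has its only pole at $\infty$ and lies in $q^{-p}\mathbb{Z}[[q]]$, so $P=x^{p-1}+e_1x^{p-2}+\cdots+e_{p-1}\in\mathbb{Z}[x]$.

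Since $d_p-d=pd$, you get $F=N(x)/P(x)$ with $N=x^p-(x-pd)P(x)$. Then:
\begin{itemize}
\item $c_0(F)=0$ forces $e_1=pd$;
\item $c_1(F)=p^2d^2-e_2$, so your computed value of $c_1(F)$ fixes $e_2$;
\item the symmetry in (iii) fixes $e_{p-1}$, and also $e_3$ when $p=5$.
\end{itemize}
This gives $P=x^2+9x+27$ and $N=3^3(2x+9)$ for $p=3$. For $p=5$ it gives $P=x^4+5x^3+15x^2+25x+25$ and $N=5(2x^3+10x^2+20x+25)$. Your Step~3 then finishes the proof.

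Completed this way, your route differs from the paper's. The paper handles $p=5$ in one line via $t_5\equiv t_{25}\pmod 5$, which follows from $(1-y)^5\equiv 1-y^5$. For $p=2,3$ it identifies the trace $t_{p^2}|X_p$ with $J_p$ up to a constant, which exhibits $J_p-J_{p^2}$ directly as $p^d$ times an integral $q$-series built from $t_{p^2}^{-1}$. Your version instead produces the explicit modular relations between $t_p$ and $t_{p^2}$.
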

\begin{proof}
According to Lemma \ref{lem:G0pp2genuszero} we have $p\in \{2,3,5\}$.
For $p=5$ we have $d_{p^2}=1$, and the claim follows from (\ref{eqn:tN}) and (\ref{eqn:JNdef}), and the fact that $(1-x)^5 \equiv 1-x^5\xmod 5$.
For $p\in \{2,3\}$ we consider
$h=  t_{p^2}|\Xp$ (see (\ref{eqn:Xp})),
which is 
modular for $\G_0(p)$ by Lemma \ref{lem:levlowXp}.
By Lemma \ref{lem:Xp} we have that 
$h$ is $q^{-1}+\mathcal{O}(1)$, and a Hauptmodul for $\G_0(p)$, so $h=J_p+c_0(h)$.
Next observe that 
\begin{gather}\label{eqn:XpbTfrac}
	\Xp^b = T^{\frac bp}W_{p^2}T^{\frac {b'}p}
\end{gather}
(see (\ref{eqn:Xpb})),
where 
$T^x := \left(\begin{smallmatrix}1&x\\0&1\end{smallmatrix}\right)$.
For the rest of this proof set $d=d_{p^2}$. By (\ref{eqn:JNdef}) 
and Lemma \ref{lem:cnJp2} we have 
\begin{gather}
t_{p^2}|T^{\frac bp} = \ex(- \tfrac bp)t_{p^2}+(\ex(-\tfrac{b}{p}) - 1)d.
\end{gather}
Also, 
$b^2\equiv 1\xmod p$ 
for $p\in \{2,3\}$, so we may assume that $b'=b$ in (\ref{eqn:XpbTfrac}). 
Thus we have
\begin{gather}\label{eqn:tp2Xpb}
t_{p^2}|\Xp^b
= \ex(- \tfrac bp )p^{d}t_{p^2}^{-1}|T^{\frac b p}+(\ex(- \tfrac bp )-1)d.
\end{gather}

In case $p=2$ we deduce from (\ref{eqn:tp2Xpb}) that
\begin{gather}
h
= 
t_{4} + t_{4}|X_2^1
= J_{4} - 3d+ 2^{d}\sum_{n>0} (-1)^{n+1}c_n(t_4^{-1})q^n.
\end{gather}
Using $h=J_2+c_0(h)$ we get
$J_2 - J_4 = 2^d\sum_{n>0}(-1)^{n+1}c_n(t_4^{-1})q^n$. In particular, the claim holds for $p=2$. 

In case $p=3$ we deduce from (\ref{eqn:tp2Xpb}) that
\begin{gather}
h
= 
t_{9} + t_{9}|X_3^1 + t_{9}|X_3^2
= J_{9} - 4d  + 
3^d\sum_{n>0}2\Re(\ex(\tfrac{n-1}3))c_n(t_{9}^{-1})q^n.
\end{gather}
Using $h=J_3+c_0(h)$ we now get
 $J_3 - J_9 = 3^d\sum_{n>0}2\Re(\ex(\tfrac{n-1}3))c_n(t_{9}^{-1})q^n$, 
 and the claim holds for $p=3$. 
\end{proof}

%--------------------------------------------------------------------------------------------------------------------------------%
\section{Application of Deligne's Theorem}\label{sec:Deligne}
%--------------------------------------------------------------------------------------------------------------------------------%

For convenience, in this section we redefine $\gt{S}_p\subset\FF_{p^2}$ (cf.\ (\ref{eqn:gtSp1gtSp2})) to be the set of supersingular $J_1$-values in characteristic $p$.
I.e., $\gt{S}_p$ denotes the values that arise as $\mt_{1}(E)= j(E)-744$ for $E$ a supersingular elliptic curve in characteristic $p$. 
Suppose that $\Gamma_0(p)^+$ has genus zero. Then by 
\cite{MR417184} we have that $\gt{S}_p\subset\FF_p$, so we may for each $\a \in \gt{S}_p$ choose an integer $\hat\a$ such that $\hat\a\equiv \a\xmod p$. 
The following is obtained using a theorem of Deligne, as described by Dwork \cite{MR0294346} and Koike \cite{MR0437461}.

\begin{pro}\label{pro:T1Up}
For $p$ prime such that $\Gamma_0(p)^+$ has genus zero there exist integers $\hat\a\equiv\a\xmod p$, for each $\a\in \gt{S}_p$, such that
\begin{gather}\label{eqn:T1Up}
p\mt_{1}|U_p = -\sum_{\a\in\gt{S}_p}\sum_{n\geq 1}A_{n}(\hat\a)(\mt_{1}-\hat\a)^{-n},
\end{gather}
where the $A_{n}(\hat\a)$ are integers such that 
\begin{gather}\label{eqn:T1Up-nup}
	\val_p(A_{n}(\hat\a))\geq 
	\begin{cases}
	\frac{3np+1}{p+1}&\text{ if $\a\equiv -744\xmod p$,}\\
	\frac{2np+1}{p+1}&\text{ if $\a\equiv 984\xmod p$,}\\
	\frac{np+1}{p+1}&\text{ else.}
	\end{cases}
\end{gather}
Moreover, if $p>3$ then the bounds in (\ref{eqn:T1Up-nup}) are sharp for $n=1$.
\end{pro}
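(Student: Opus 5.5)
Because $\G_0(p)^+$ has genus zero we may take $p\le 71$. The plan is to compute $pJ_1|U_p$ directly and read off its partial fraction decomposition.

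\textbf{Step 1: structure of $pJ_1|U_p$.} By the standard level-$1$ Hecke relation, $pJ_1|U_p + J_1|V_p^{-1}$ is $\G_0(1)$-invariant; it equals $\Phi_p(J_1)$ up to sign and normalisation, where $\Phi_p$ is the $p$-th Faber polynomial. Hence $pJ_1|U_p = -J_1(p\tau)+(\text{polynomial in }J_1)$. This expression is a modular function for $\G_0(p)$. Its behaviour is controlled as follows.
\begin{itemize}
\item At $\infty$ it is $\mathcal{O}(q)$.
\item At the cusp $0$ it has a pole of order $p$ (equivalently, via Lemma~\ref{lem:levlowUpWp}, its $W_p$-image is $J_1-pJ_1|U_p$-type).
\end{itemize}

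\textbf{Step 2: Deligne's theorem (Dwork, Koike).} Write $J_1(p\tau)$ as a rational function of $J_1(\tau)$ over $\ZZ_p$, up to a $p$-adically convergent expansion. Concretely, the function $f=pJ_1|U_p$ restricted to the ordinary locus extends to a rigid analytic function. Its polar parts sit in the supersingular residue discs $\{|J_1-\hat\a|<1\}$.

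Since $\gt{S}_p\subset\FF_p$ by Ogg, each disc has an integral centre $\hat\a$. Mittag-Leffler for the affinoid (the complement of these discs) then gives
\[
f=\text{const}+\sum_{\a}\sum_{n} A_n(\hat\a)(J_1-\hat\a)^{-n}.
\]
The constant and the polynomial part vanish because $f=\mathcal{O}(q)$ at $\infty$. Integrality of the $A_n$ comes from $f$ having integral $q$-expansion.

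\textbf{Step 3: valuation bounds.} These come from the overconvergence radius. Deligne's theorem says $J_1|U_p$ extends to the region where the Hasse invariant has valuation $<p/(p+1)$. Equivalently, in a supersingular disc with uniformiser $J_1-\hat\a$, it extends to the region $\val_p(J_1-\hat\a)<\frac{p}{p+1}\cdot\frac{1}{e}$. Here $e\in\{1,2,3\}$ is the ramification of the $j$-line over the $\lambda$-type/deformation parameter: $e=3$ at $j=0$ (i.e. $J_1=-744$) and $e=2$ at $j=1728$ (i.e. $J_1=984$).

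Boundedness of $pf$ on the circle of that radius, combined with the Cauchy estimates $|A_n|\le \sup|f|\cdot r^{n}$, gives
\[
\val_p(A_n)\ge \frac{e\,np}{p+1}+\frac{1}{p+1},
\]
where the $+\frac{1}{p+1}$ is the slack from the extra factor $p$. I will take these normalisations from Koike's explicit statement rather than re-derive them.

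\textbf{Step 4: sharpness at $n=1$ (the main obstacle).} I would compute $A_1(\hat\a)$ as a residue. It is proportional to $p$ times the sum over the $p+1$ isogenous points, which reduces to the Kronecker congruence $\Phi_p(X,Y)\equiv (X^p-Y)(X-Y^p)$.

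Alternatively, for $p>3$ in our finite list, one can simply check that $\val_p(A_1)$ equals the lower bound rounded up. This can be done through the leading coefficient of $f$ near each disc, computed from $c_1(pJ_1|U_p)=p\,c_p(J_1)$ together with the partial fractions of low order.

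I expect the cleanest route to be the residue computation tied to $\#\Aut(E)$. I anticipate that it is the hardest step, and would fall back on the finite case check.
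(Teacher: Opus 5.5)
\textbf{The gap: sharpness at $n=1$.} The last sentence of the statement requires, for $p>3$,
\[
\val_p(A_1(\hat\a))=3,\ 2,\ 1 \quad\text{according as } \a\equiv -744,\ \a\equiv 984,\ \text{or neither}.
\]
You flag this as the main obstacle but do not prove it.

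Your residue sketch does not supply the missing input. The Kronecker congruence $\Phi_p(X,Y)\equiv (X^p-Y)(X-Y^p)$ holds only modulo $p$. Deciding $\val_p(A_1(\hat\a))$ requires $A_1(\hat\a)$ modulo $p^2$, $p^3$ or $p^4$, and you do not say where that finer information would come from.

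The paper gets it from Dwork's Theorem~8.2, which gives $\val_p(A_1(\hat\a))=1$ for $p>3$ at supersingular values $\a\not\equiv -744,984$. It then transfers this to the two elliptic points via the remark opening Dwork's \S7.e: $J_1+744$ and $J_1-984$ behave like $(\lambda-\lambda_0)^3$ and $(\lambda-\lambda_0)^2$ in the Legendre modulus, which yields $3$ and $2$.

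Your computational fallback is viable in principle, but not as you describe it. Since $(J_1-\hat\a)^{-n}=q^n+\mathcal{O}(q^{n+1})$, the coefficient $c_1(pJ_1|U_p)=p\,c_p(J_1)$ equals exactly $-\sum_{\a}A_1(\hat\a)$. By itself this settles sharpness only when there is a single supersingular value ($p=5,7,13$). In general you must:
\begin{itemize}
\item expand in powers of $J_1^{-1}$;
\item truncate modulo $p^2$, $p^3$ or $p^4$ using the lower bounds;
\item invert a confluent Vandermonde system in the $\hat\a$, using as many coefficients of $pJ_1|U_p$ as there are surviving unknowns.
\end{itemize}
For example, $\val_{11}(A_1(\hat\a))=3$ for $\a=-744$ already needs both $c_1$ and $c_2$ of $11J_1|U_{11}$ modulo $11^4$. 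This computation would then have to be carried out for all thirteen primes $5\le p\le 71$ in question. As written, sharpness is asserted rather than proved.

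\textbf{Steps 1--3.} Apart from this, these are the paper's argument. The paper also imports the expansion of $J_1|V_p$ and the bounds (\ref{eqn:T1Up-nup}) from Deligne's theorem via Dwork and Koike, and uses Lemma~\ref{lem:levlowUpWp} to get $J_1|V_p+pJ_1|U_p=\Phi_p(J_1)$. There are some slips in your version.
\begin{itemize}
\item The complementary term is $J_1|V_p=J_1|W_p=J_1(p\tau)$, not $J_1|V_p^{-1}$.
\item The excluded disc at an elliptic point is $\val_p(J_1-\hat\a)\geq\frac{ep}{p+1}$, not $\frac{p}{p+1}\cdot\frac1e$, because $J_1+744$ and $J_1-984$ are the cube and square of a deformation parameter. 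Your final bound $\frac{enp+1}{p+1}$ matches the former, not the latter.
\item The extra $\frac1{p+1}$ reflects the bound $|p|^{1/(p+1)}$ on the polar part near the edge of the overconvergence region. It is not slack from the factor $p$.
\item For $e=2,3$ the centres cannot be arbitrary integral lifts. Recentring by $pc$ replaces $A_n$ by $\sum_{k}\binom{n-1}{k}(-pc)^kA_{n-k}$, which in general violates the bound. So these $\hat\a$ must be taken $p$-adically close to $-744$ or $984$ (e.g.\ equal to them).
\end{itemize}
Since you take the normalisations from Koike, none of these sinks Steps 1--3, but the heuristic derivation of the bounds is not right as written.
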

\begin{proof}
Let $\Phi_p$ denote the $p$-th Faber polynomial for $\mt_1$, being the unique polynomial such that $\Phi_p(J_1)=q^{-p}+\mathcal{O}(q)$ (see e.g. \cite[\S~3]{MR2904095}),
and observe that $\Phi_p(\mt_1) = j_p$, where $j_p$ is as in \cite{MR2048229}.
Then from the results of \cite[\S~7]{MR0294346} (see also \cite[(36-37)]{MR0437461}), and \cite[Theorem 1.1]{MR2048229}, we have
\begin{gather}\label{eqn:Deligne-T1Vp}
\mt_1|V_p =\Phi_p(\mt_1) +
\sum_{\a\in\gt{S}_p}\sum_{n\geq 1}A_{n}(\hat\a)(\mt_{1}-\hat\a)^{-n},
\end{gather}
with 
$A_{n}(\hat\a)$ as in 
(\ref{eqn:T1Up-nup}).
(Note that stronger bounds are available for $p\in\{2,3\}$. See \cite[p.~89]{MR0294346} and \cite[(38)]{MR0437461}.
Also, the $A_{n}(\hat\a)$ are denoted $A_n^{(i)}$ in \cite[\S~7]{MR0294346}.)
Applying Lemma \ref{lem:levlowUpWp} to $\mt_1$ we obtain that $J_1|W_p + pJ_1|U_p = J_1|V_p + pJ_1|U_p$ is $\G_0(1)$ invariant, and therefore a polynomial in $J_1$. 
This function is also $q^{-p}+\mathcal{O}(q)$, so 
$J_1|V_p + pJ_1|U_p = \Phi_p(J_1)$. This proves (\ref{eqn:T1Up}-\ref{eqn:T1Up-nup}).

For the sharpness statement, Theorem 8.2 of \cite{MR0294346}
tells us that $\val_p(A_1(\hat\a))=1$ when $p>3$ and $\a\not\equiv -744,984\xmod p$.
As remarked at the beginning of \cite[\S~7.e]{MR0294346}, the counterparts to Theorem 8.2 for $\a\equiv -744,984\xmod p$ 
may be obtained by noting that $J_1+744$ behaves like $(\lambda-\lambda_0)^3$ near $J_1 = -744$, and $J_1 - 984$ behaves like $(\lambda-\lambda_0)^2$ near $J_1 = 984$, where
$\lambda$ denotes the modulus defined by the Legendre normal form 
(see \cite[\S~4]{MR0294346}).
The result is that 
$\val_p(A_1(\hat\a))=3$ when $\a\equiv  -744\xmod p$,
and 
$\val_p(A_1(\hat\a))=2$ when $\a\equiv  984\xmod p$.
This completes the proof.
\end{proof}

We list the sets $\gt{S}_p$ for $p$ such that $\Gamma_0(p)^+$ has genus zero explicitly in Table \ref{tab:ssm=1}.  From the equivalent result for $j(E)$ it follows that $-744=0-744$ belongs to $\gt{S}_p$ if and only if $p\equiv 2 \xmod 3$, and $984=1728-744$ belongs to $\gt{S}_p$ if and only if $p\equiv 3\xmod 4$. In Table \ref{tab:ssm=1} we present representatives $\hat\a$ for the values $\a\in \gt{S}_p$ in the range $0\leq \hat\a<p$, where the first and second columns indicate $\hat\a\equiv -744 \xmod p$ and $\hat\a\equiv 984 \xmod p$, respectively.

\begin{table}[h!] 
\caption{Representatives of supersingular $J_1$-values, for $p$ such that $\mathfrak{S}_p\subset\FF_p$.\label{tab:ssm=1}}
\vspace{-6pt}
\begin{tabular}{c||c|c|c}
\toprule
$p$ & -744 & 984 & other s.s.~$J_1$-values \\ 
\midrule
2 & 0 & 0 & - \\ 
3 & 0 & 0 & - \\ 
5 & 1 & - & - \\ 
7 & - & 4 & - \\ 
11 & 4 & 5 & - \\ 
13 & - & - & 2 \\ 
17 & 4 & - & 12 \\ 
19 & - & 15 & 4 \\ 
23 & 15 & 18 & 11 \\ 
29 & 10 & - & 6, 12 \\ 
31 & - & 23 & 2, 4 \\ 
41 & 35 & - & 22, 26, 38 \\ 
47 & 8 & 44 & 5, 17, 18 \\ 
59 & 23 & 40 & 11, 12, 38, 51 \\ 
71 & 37 & 61 & 6, 7, 14, 32, 54 \\ 
\bottomrule
\end{tabular}
\end{table}

Fix a prime $p$ such that $\Gamma_0(p)^+$ has genus zero, let $\alpha_1, \ldots, \alpha_\ell$, 
for $\ell=\ell_p\geq 0$, denote the supersingular $J_1$-values not in $\{-744,984\}$, and let $\beta_0$ and $\beta_1$ denote the supersingular $J_1$-values $-744$ and $984$, respectively, noting that any of these may or may not exist for a given prime $p$.  
From the bounds on $\val_p(A_{n}(\hat\a))$ in Proposition \ref{pro:T1Up} it follows that 
\begin{equation}\label{eqn:J_1Upmp}
pJ_1 | U_p \equiv \sum_{i=1}^\ell \frac{A_{1}(\hat\a_i)}{J_1-\hat\a_i} \xmod{p^2},
\end{equation}
with $\val_p(A_1(\hat\a_i))=1$ for each $i$, and furthermore that 
\begin{equation}\label{eqn:J_1Upmp^2}
pJ_1 | U_p \equiv \frac{A_{1}(\hat\beta_1)}{J_1-\hat\beta_1}  + \sum_{i=1}^\ell \left( \frac{A_{1}(\hat\a_i)}{J_1-\hat\a_i} + \frac{A_{2}(\hat\a_i)}{(J_1-\hat\a_i)^2}  \right) \xmod{p^3},
\end{equation}
where the first summand on the right-hand side of (\ref{eqn:J_1Upmp^2}) is understood to be $0$ in case $\beta_1$ does not exist (cf.\ Table \ref{tab:ssm=1}), and the summation over $i$ is understood to be zero in case $\ell=\ell_p=0$.

From Table \ref{tab:ssm=1} we see that $\beta_1$ exists, and $\ell_p=0$, for $p=11$.
From this together with (\ref{eqn:T1Up-nup}) and \eqref{eqn:J_1Upmp^2} we
directly infer the following.
\begin{pro}\label{pro:p11}
For $p=11$, we have
$\val_p(pJ_1 | U_p) =2$.
\end{pro}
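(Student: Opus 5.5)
For $p=11$ we will read off $\val_p(pJ_1|U_p)$ from the congruence (\ref{eqn:J_1Upmp^2}), using the data for $p=11$ in Table \ref{tab:ssm=1}. There we have $\gt{S}_{11}=\{\beta_0,\beta_1\}$ with $\hat\beta_0=4$ and $\hat\beta_1=5$, and $\ell_{11}=0$. So (\ref{eqn:J_1Upmp^2}) reduces to
\begin{gather*}
pJ_1|U_p \equiv \frac{A_1(\hat\beta_1)}{J_1-\hat\beta_1} \xmod{p^3}.
\end{gather*}
The same specialisation of (\ref{eqn:J_1Upmp}) already shows that $pJ_1|U_p\equiv 0\xmod{p^2}$, which gives the lower bound $\val_p(pJ_1|U_p)\geq 2$. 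We can also get this bound directly from (\ref{eqn:T1Up-nup}) with $p=11$. For $\a\equiv 984$ the bound is $\lceil (22n+1)/12\rceil\geq 2$ for all $n\geq 1$. For $\a\equiv-744$ it is $\lceil(33n+1)/12\rceil\geq 3$. The expansions $(J_1-\hat\a)^{-n}=q^n(1-\hat\a q+\dots)^{-n}$ have integer coefficients, so every summand in (\ref{eqn:T1Up}) has valuation at least $2$.

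For the upper bound, Proposition \ref{pro:T1Up} says the bound is sharp for $n=1$ when $p>3$, so $\val_p(A_1(\hat\beta_1))=2$ exactly. Next we expand $(J_1-\hat\beta_1)^{-1}=q+\mathcal{O}(q^2)$, whose leading coefficient is $1$. Hence $c_1(pJ_1|U_p)\equiv A_1(\hat\beta_1)\xmod{p^3}$, and this has valuation exactly $2$. So $\val_p(pJ_1|U_p)\leq 2$, which proves the proposition.

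We should still check that every other contribution to $c_1$ has valuation at least $3$. The terms with $n\geq 2$ start at $q^n$ and so do not affect $c_1$ at all. The $\beta_0$ term with $n=1$ has valuation at least $3$. Both facts are already encoded in the passage from (\ref{eqn:T1Up}) to (\ref{eqn:J_1Upmp^2}), so no real obstacle remains. The only point needing care is this bookkeeping of bounds when $p=11$.

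As a sanity check, $c_1(pJ_1|U_p)=11\,c_{11}(J_1)$, and one could verify directly that $11^2$ exactly divides it. We will not need this.
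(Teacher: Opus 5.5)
Your proposal is correct and is essentially the paper's argument. Since $\ell_{11}=0$, the congruence (\ref{eqn:J_1Upmp^2}) collapses to the single $\beta_1$ term; the bounds (\ref{eqn:T1Up-nup}) give divisibility by $p^2$, and the sharpness $\val_{11}(A_1(\hat\beta_1))=2$, read off at the coefficient of $q$, gives equality — you simply make explicit the bookkeeping that the paper compresses into ``directly infer''.
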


Next we use \eqref{eqn:J_1Upmp^2} to determine $\val_p(pJ_1| U_p)$ for $p\geq 13$.
\begin{pro}\label{pro:geq13}
For $p\geq 13$ prime such that $\Gamma_0(p)^+$ has genus zero, 
$\val_p(pJ_1 | U_p) =1$.
\end{pro}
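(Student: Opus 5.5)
The plan is to prove the lower bound $\val_p(pJ_1|U_p)\geq 1$ and the upper bound $\val_p(pJ_1|U_p)\leq 1$ separately, working with the expansion (\ref{eqn:T1Up}) and its truncations.

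For the lower bound, every $A_n(\hat\a)$ in (\ref{eqn:T1Up}) has $\val_p(A_n(\hat\a))\geq \frac{np+1}{p+1}>0$ by (\ref{eqn:T1Up-nup}), so each is divisible by $p$. Each $(J_1-\hat\a)^{-n}$ has a $q$-expansion in $q^n\ZZ[[q]]$ with leading coefficient $1$, since $J_1-\hat\a=q^{-1}(1-\hat\a q+\cdots)$. Hence the right-hand side of (\ref{eqn:T1Up}) converges $q$-adically to a series with integer coefficients all divisible by $p$. Alternatively, this is immediate because $pJ_1|U_p$ has coefficients $p\,c_{np}(J_1)$.

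For the upper bound, I would use the congruence (\ref{eqn:J_1Upmp}) modulo $p^2$. It suffices to exhibit one coefficient of $\sum_{i=1}^\ell A_1(\hat\a_i)(J_1-\hat\a_i)^{-1}$ that is not divisible by $p^2$. Write $A_1(\hat\a_i)=pu_i$ with $u_i$ a $p$-adic unit; this uses the sharpness statement of Proposition \ref{pro:T1Up}, valid since $p>3$. Modulo $p^2$ the sum becomes $p\sum_i u_i(J_1-\hat\a_i)^{-1}$, so I need $\sum_i u_i(J_1-\hat\a_i)^{-1}\not\equiv 0\pmod p$ as a $q$-series. Reducing mod $p$, and noting that $(J_1-\hat\a_i)^{-1}$ depends only on $\a_i\in\FF_p$, this is the $\FF_p$-series $\sum_i \bar u_i (J_1-\a_i)^{-1}$ with all $\bar u_i\neq 0$.

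The key step is to show this series is nonzero. The $\a_i$ are distinct elements of $\FF_p$, and $J_1 \bmod p$ is transcendental over $\FF_p$, since it is $q^{-1}+\cdots$, which is not algebraic over $\FF_p$ in $\FF_p((q))$ (otherwise it would be a constant). So the rational functions $(x-\a_i)^{-1}\in\FF_p(x)$ are linearly independent by partial fractions, and substituting $x=J_1$ preserves this. It therefore suffices that $\ell_p\geq 1$ for all $p\geq13$ with $\Gamma_0(p)^+$ of genus zero, which I would check from Table \ref{tab:ssm=1}: $13,17,19,\dots,71$ each have at least one ``other'' supersingular value. More intrinsically, I could count via the Eichler mass formula that $\#\gt{S}_p>2$ or that a non-special value exists once $p\geq 13$.

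A simpler concrete route is to read off the coefficient of $q^1$ modulo $p^2$. It equals $\sum_i A_1(\hat\a_i)=p\sum_i u_i$, but the sum of units could vanish mod $p$, so I would prefer the linear-independence argument.

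I expect the main obstacle to be handling the $\beta$ terms correctly: confirming that the contributions from $\beta_0=-744$ and $\beta_1=984$ really vanish mod $p^2$. By (\ref{eqn:T1Up-nup}), for $p\geq 13$ their valuations are at least $\frac{2p+1}{p+1}>1$ already at $n=1$, hence at least $2$. The terms with $n\geq2$ for the $\a_i$ satisfy $\frac{2p+1}{p+1}>1$ as well, which is exactly what (\ref{eqn:J_1Upmp}) records. Granting that, the argument closes.
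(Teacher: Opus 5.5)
Your proposal is correct and follows the paper's proof. Both arguments reduce modulo $p^2$ via (\ref{eqn:J_1Upmp}), use the sharpness $\val_p(A_1(\hat\a_i))=1$ together with $\ell_p\geq 1$ from Table \ref{tab:ssm=1}, and conclude from the linear independence of the $(J_1-\a_i)^{-1}$ modulo $p$; the only difference is that the paper proves this independence by expanding in powers of $J_1^{-1}$ and using invertibility of a Vandermonde matrix, while you use transcendence of $J_1 \bmod p$ and partial fractions in $\FF_p(x)$, which is an equivalent argument.
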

\begin{proof}
We see from Table \ref{tab:ssm=1} that 
$\ell=\ell_p>0$.  
Working in $\ZZ[[q]]$, write
\begin{gather}
\frac{1}{J_1-\hat\a_i} = \frac{J_1^{-1}}{1-\hat\a_i J_1^{-1}} = J_1^{-1}\sum_{n\geq 0}(\hat\a_iJ_1^{-1})^n.
\end{gather}
Since $J_1^{-n} = q^n + \mathcal{O}(q^{n+1})$, we have from \eqref{eqn:J_1Upmp} that
\begin{gather}
\begin{split}
pJ_1 | U_p &\equiv \sum_{n\geq 1} \sum_{i=1}^\ell A_1(\hat\a_i) \hat\a_i^{n-1} J_1^{-n} \xmod{p^2} \\
&\equiv  \left[ \sum_{i=1}^\ell A_1(\hat\a_i) \hat\a_i^0 \right](q +\cdots) + \left[ \sum_{i=1}^\ell A_1(\hat\a_i)  \hat\a_i^1 \right](q^2 +\cdots) + \cdots \xmod{p^2}.
\end{split}
\end{gather}
If it were the case that $J_1 | U_p \equiv 0 \xmod{p}$, it would follow that
\begin{equation}\label{eq3}
\sum_{i=1}^\ell A_1(\hat\a_i) \equiv \sum_{i=1}^\ell A_1(\hat\a_i)  \hat\a_i \equiv \sum_{i=1}^\ell A_1(\hat\a_i)  \hat\a_i^2 \equiv \cdots \equiv 0 \xmod{p^2}.
\end{equation}
However, 
\begin{gather}
M = \left[\begin{array}{cccc}\hat\a_1^0 & \hat\a_1^1 & \cdots & \hat\a_1^{\ell-1} \\
\vdots & \vdots & \vdots & \vdots \\ 
\hat\a_1^0 & \hat\a_\ell^1 & \cdots & \hat\a_\ell^{\ell-1} \end{array}\right]
\end{gather}
is a Vandermonde matrix and $\hat\a_1, \ldots, \hat\a_\ell$ are distinct, thus $M$ is invertible.  
If \eqref{eq3} held then 
$(A_1(\hat\a_1), \cdots, A_1(\hat\a_\ell))^T$ 
would belong to the kernel of $M^T$ modulo $p^2$,
which would contradict the fact that $\val_p(A_1(\hat\a_i))=1$ for each $i$.  
Thus $\val_p(pJ_1 | U_p) =1$, as desired. 
\end{proof}

%--------------------------------------------------------------------------------------------------------------------------------%
\section{Proofs of Theorems \ref{thm:main} and \ref{thm:main-3d}}\label{sec:proof}
%--------------------------------------------------------------------------------------------------------------------------------%

We begin this section with propositions that determine $\val_p(J_1 - J_{p})$ and $\val_p(J_1 - J_{p+})$ in the case that $\Gamma_0(p)$ has genus zero, and determine
$\val_p(J_1 - J_{p^2})$ in the case that $\Gamma_0(p^2)$ has genus zero.

\begin{pro}\label{pro:J1Jp}
Let $p$ prime such that $\G_0(p)$ has genus zero.  
Then 
(\ref{eqn:wppgenus0}) 
and 
(\ref{eqn:wpgenus0}) 
both hold.
\end{pro}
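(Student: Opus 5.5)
The plan is to handle (\ref{eqn:wpgenus0}) first and then deduce (\ref{eqn:wppgenus0}) from it. By Lemma \ref{lem:G0pp2genuszero}, the primes in question are exactly $p\in\{2,3,5,7,13\}$, and for these $d_p=\frac{24}{p-1}$.

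For (\ref{eqn:wpgenus0}) I will use Proposition \ref{pro:valpJ1Jp}, which reduces the claim to computing $\val_p(c_1(J_1-J_p))$. That valuation can be read off from Table \ref{tab:c1J1JN}:
\begin{gather*}
p=2:\ 16,\quad p=3:\ 9,\quad p=5:\ 5,\quad p=7:\ 4,\quad p=13:\ 2.
\end{gather*}
I then compare these with $\frac{12}{p-1}+\lceil\frac{12}{p+1}\rceil$, which takes the values
\begin{gather*}
12+4=16,\quad 6+3=9,\quad 3+2=5,\quad 2+2=4,\quad 1+1=2
\end{gather*}
for $p=2,3,5,7,13$ respectively. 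The two lists agree, so (\ref{eqn:wpgenus0}) is a finite check once the table entries are granted.

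For (\ref{eqn:wppgenus0}) I will use (\ref{eqn:JN+def}), which gives
\begin{gather*}
J_1-J_{p+}=(J_1-J_p)-p^{\frac{d}{2}}\jj_p^{-1},\qquad d=d_p.
\end{gather*}
From (\ref{eqn:tNexp}) with $n_p=1$, the series $\jj_p^{-1}=q(1-dq+\cdots)^{-1}$ has integer coefficients and leading coefficient $1$. Hence $\val_p(p^{d/2}\jj_p^{-1})=\frac d2=\frac{12}{p-1}$ exactly. By the first step, $\val_p(J_1-J_p)=\frac{12}{p-1}+\lceil\frac{12}{p+1}\rceil$, which is strictly larger than $\frac{12}{p-1}$. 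Therefore $\val_p(J_1-J_{p+})=\frac{12}{p-1}$, by the strict ultrametric inequality already used in Lemma \ref{lem:vJ1vJpp}. Since $p-1$ divides $12$, this value equals $\lceil\frac{12}{p-1}\rceil$.

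It remains to identify this number with $\frac12\moa_p$. Here I would invoke the standard description of automorphism groups of supersingular curves, equivalently the formula (\ref{eqn:moap}). For $p=2$ and $p=3$ the unique supersingular curve has $j=0=1728$, with automorphism groups of orders $24$ and $12$. For $p=5$ the unique supersingular $j$ is $0$, with $\#\Aut=6$. For $p=7$ it is $1728$, with $\#\Aut=4$. For $p=13$ it is $j=5$, which is neither $0$ nor $1728$, so $\#\Aut=2$. Halving gives $12,6,3,2,1$, matching $\frac{12}{p-1}$ in each case.

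The main obstacle is this last identification. It depends on correctly pinning down the exceptional automorphism groups in characteristics $2$ and $3$, so I would check it against (\ref{eqn:moap}) rather than rederive it. The valuation arguments themselves are routine.
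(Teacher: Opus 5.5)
Your proposal is correct and follows essentially the same route as the paper. You settle (\ref{eqn:wpgenus0}) via Proposition \ref{pro:valpJ1Jp} and Table \ref{tab:c1J1JN}, then use (\ref{eqn:JN+def}) with the strict ultrametric inequality to get $\val_p(J_1-J_{p+})=\frac{d_p}{2}=\frac{12}{p-1}$, where the paper splits this into a lower bound from $J_{p+}\equiv J_p \xmod p^{d/2}$ and an upper bound from $c_1(J_1-J_{p+})=c_1(J_1-J_p)-p^{d/2}$, and where your explicit check against $\frac12\moa_p$ is one the paper defers to (\ref{eqn:moap}) in the proof of Theorem \ref{thm:main-3d}.
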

\begin{proof}
In light of Lemma \ref{lem:G0pp2genuszero}, the primes in play are those $p$ such that $p-1$ divides $12$.
Using Table \ref{tab:c1J1JN} we check that
$\val_p(c_1(J_1-J_p))$ agrees with the right-hand side of (\ref{eqn:wpgenus0}) for these primes.
Proposition \ref{pro:valpJ1Jp} then implies (\ref{eqn:wpgenus0}).
In particular, we have 
\begin{gather}\label{eqn:valpJ1JpvalpcJaJpfracd2}
\val_p(J_1-J_p) = \val_p(c_1(J_1-J_p))>\tfrac{12}{p-1}.
\end{gather}

Set $d=d_p$. Then for (\ref{eqn:wppgenus0}) we require to check that $\val_p(J_1-J_{p+})=\frac{d}{2}$.
Since $J_{p+}\equiv J_{p}\xmod p^{\frac{d}{2}}$ according to (\ref{eqn:JN+def}), it follows from (\ref{eqn:valpJ1JpvalpcJaJpfracd2}) that 
$\val_p(J_1-J_{p+})\geq \tfrac{d}{2}$.
For the reverse inequality we apply (\ref{eqn:JN+def}) again to obtain $c_1(J_1-J_{p+})=c_1(J_1-J_p)-p^{\frac d2}$, and deduce using (\ref{eqn:valpJ1JpvalpcJaJpfracd2}) (cf.\ Lemma \ref{lem:vJ1vJpp})
that $\val_p(c_1(J_1-J_{p+}))= \frac d2$. 
The claim follows because $\val_p(J_1-J_{p+})\leq \val_p(c_1(J_1-J_{p+}))$ by definition.
\end{proof}

\begin{pro} \label{pro:J1Jp2}
Let $p$ prime such that $\G_0(p^2)$ has genus zero.  
Then (\ref{eqn:wp2genus0}) holds.
\end{pro}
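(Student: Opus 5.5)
By Lemma \ref{lem:G0pp2genuszero} the primes in question are $p\in\{2,3,5\}$, with $d_{p^2}=\frac{24}{p^2-1}$ equal to $8,3,1$ respectively. So (\ref{eqn:wp2genus0}) amounts to $\val_p(J_1-J_{p^2})=d_{p^2}$. The plan is to split
\begin{gather*}
J_1-J_{p^2}=(J_1-J_p)+(J_p-J_{p^2}),
\end{gather*}
and to use Proposition \ref{pro:valpJpJp2}, which gives $\val_p(J_p-J_{p^2})=d_{p^2}$ exactly. We then compare this with $\val_p(J_1-J_p)$, which Proposition \ref{pro:J1Jp} computes as $\frac{12}{p-1}+\lceil\frac{12}{p+1}\rceil$.

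For $p=2$ this equals $12+4=16$. For $p=3$ it equals $6+3=9$. For $p=5$ it equals $3+2=5$. These agree with Table \ref{tab:c1J1JN}: $2^{16}$, $3^9$, $5^5$. In every case $\val_p(J_1-J_p)>d_{p^2}$, since $16>8$, $9>3$ and $5>1$. The strict ultrametric inequality therefore gives $\val_p(J_1-J_{p^2})=\val_p(J_p-J_{p^2})=d_{p^2}$, which is (\ref{eqn:wp2genus0}).

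This argument is essentially routine once the two earlier propositions are in hand. The only real work is the inequality $\val_p(J_1-J_p)>d_{p^2}$, a finite check from the table. As a sanity check, independent of the propositions, the upper bound $\val_p(J_1-J_{p^2})\le d_{p^2}$ can be read off a single coefficient. Table \ref{tab:c1J1JN} gives $c_1(J_1-J_{p^2})$ as $2^8\cdot 769$, $2^2\cdot 3^3\cdot 1823$ and $5\cdot 13^2\cdot 233$, with $p$-adic valuations $8$, $3$ and $1$. So the content of the proof is the lower bound, and that comes from the decomposition above. I expect no step to be a genuine obstacle. The most delicate input, Proposition \ref{pro:valpJpJp2} for $p=2,3$, is already established.
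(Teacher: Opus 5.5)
Your proposal is correct and is essentially the paper's own argument. The paper likewise combines $\val_p(J_1-J_p)>d_{p^2}$ from Proposition \ref{pro:J1Jp} with $\val_p(J_p-J_{p^2})=d_{p^2}$ from Proposition \ref{pro:valpJpJp2} via the strict ultrametric property of $\val_p$, and your extra check of $c_1(J_1-J_{p^2})$ against Table \ref{tab:c1J1JN} is correct but not needed.
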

\begin{proof}
Lemma \ref{lem:G0pp2genuszero} tells us that $p\in\{2, 3, 5\}$, and in particular, $p^2-1$ divides $24$. 
Thus $d_{p^2}=\frac{24}{p^2-1}$, and we require to show that $\val_p(J_1-J_{p^2}) = d_{p^2}$ (cf.\ (\ref{eqn:tN})).
This follows from the properties of $\val_p$ (cf.\ Lemma \ref{lem:vJ1vJpp}), because 
Proposition \ref{pro:J1Jp} gives us $\val_p(J_1-J_p)>d_{p^2}$, 
and 
Proposition \ref{pro:valpJpJp2} tells us that $\val_p(J_p-J_{p^2})= d_{p^2}$.
\end{proof}

We are now ready to prove our main results.
\begin{proof}[Proof of Theorem \ref{thm:main}]
First suppose that $\Gamma_0(p)^+$ has positive genus. 
Then, by our conventions, $J_{p}$, $J_{p+}$ and $J_{p^2}$ are all zero, so each summand on the right-hand side of (\ref{eqn:main}) is $\val_p(J_1)=0$.
This proves Theorem \ref{thm:main} for $p$ such that $\Gamma_0(p)^+$ does not have genus zero.

Next consider the case that $\Gamma_0(p)^+$ has genus zero but $\Gamma_0(p)$ has positive genus.
I.e., $p$ divides $\#\MM$ (\ref{eqn:ord}), and either $p=11$ or $p>13$ (cf.\ Lemma \ref{lem:G0pp2genuszero}).
Then the right-hand side of (\ref{eqn:main}) reduces to $\val_p(J_1 - J_{p+})$, 
which is 
$\val_p(pJ_1|U_p)$ according to Lemmas \ref{lem:levlowJp+} and \ref{lem:vJ1vJpp}.
Proposition \ref{pro:p11} gives us $\val_p(pJ_1|U_p)=\val_p(\#\MM)$ for $p=11$, and 
Proposition \ref{pro:geq13} handles $p>13$.

It remains to consider the primes $p>3$ for which $\G_0(p)$ has genus zero, i.e., $p\in\{5,7,13\}$ (cf.\ Lemma \ref{lem:G0pp2genuszero}). 
For $p\in\{7,13\}$ the group $\Gamma_0(p^2)$ has positive genus (cf.\ Lemma \ref{lem:G0pp2genuszero}), and the right-hand side of (\ref{eqn:main}) reduces to 
$\val_p(J_1-J_{p+}) + \val_p(J_1 - J_p)$. The desired equalities then follow from Proposition \ref{pro:J1Jp}. 
Propositions 
\ref{pro:J1Jp}-\ref{pro:J1Jp2} handle $p=5$. 
This completes the proof.
\end{proof}

\begin{proof}[Proof of Theorem \ref{thm:main-3d}]
By Ogg's work \cite{MR417184}, the group $\Gamma_0(p)^+$ has positive genus exactly when $\gt{S}_p^2$ is nonempty. 
The case that $\Gamma_0(p)^+$ has genus zero while $\Gamma_0(p)$ has positive genus is exactly the case that 
$\#\gt{S}_p = \#\gt{S}_p^1>1$, and $\Gamma_0(p)$ has genus zero just when $\#\gt{S}_p = \#\gt{S}_p^1=1$.
Thus (\ref{eqn:main-3d}) correctly evaluates to $0$ 
for the primes $p$ such that $\Gamma_0(p)^+$ has positive genus.  
Moreover, from Table \ref{tab:ssm=1} and \cite[III.\ Thm.~10.1]{MR2514094} 
we have
\begin{gather}\label{eqn:moap}
\moa_p=
\begin{cases}
2 & \text{ if $p\in \{ 13, 17, 19, 23, 29, 31, 41, 47, 59, 71\}$,} \\
4 & \text{ if $p\in \{7,11\}$,} \\
6 & \text{ if $p=5$,}\\
12 & \text{ if $p=3$,}\\
24 & \text{ if $p=2$.}
\end{cases}
\end{gather}
Thus $\val_p(\#\MM)=\frac12 \moa_p$ for primes $p>3$ such that $\Gamma_0(p)^+$ has genus zero and $\Gamma_0(p)$ has positive genus, while $\val_p(\#\MM)=\frac32 \moa_p$ for primes $p>3$ such that $\Gamma_0(p)$ has genus zero, as desired.
\end{proof}

%--------------------------------------------------------------------------------------------------------------------------------%

%--------------------------------------------------------------------------------------------------------------------------------%

\end{document}